\theoremstyle{plain}
\newtheorem{theorem}{Theorem}[section]
\newtheorem{lemma}[theorem]{Lemma}
\newtheorem{corollary}[theorem]{Corollary}
\theoremstyle{definition}
\newtheorem{remark}[theorem]{Remark}
\renewcommand{\P}{{\mathbb P}}
\newcommand{\expect}{\operatorname{\mathbb{E}}}
\DeclareMathOperator{\Uniform}{\mathcal{U}}
\newcommand{\iid}{\stackrel{\textup{iid}}{\sim}}
\newcommand{\dint}{\,\mathup{d}}
\newcommand{\Finput}{\mathcal{F}}
\newcommand{\Torus}{{\mathbb T}}
\DeclareMathOperator{\dist}{dist}
\DeclareMathOperator{\median}{med}
\newcommand{\deter}{\textup{det}}
\newcommand{\Wspace}{\mathcal{W}}
\newcommand{\MC}{\textup{MC}}
\newcommand{\linMC}{\textup{linMC}}
\newcommand{\prob}{\textup{prob}}
\DeclareMathOperator{\Int}{INT}
\newcommand{\eps}{\varepsilon}
\newcommand{\embed}{\hookrightarrow}
\renewcommand{\rho}{\varrho}
\renewcommand{\vec}{\boldsymbol}
\newcommand{\R}{{\mathbb R}}
\newcommand{\N}{{\mathbb N}}
\newcommand{\Z}{{\mathbb Z}}
\DeclareMathAlphabet{\mathup}{OT1}{\familydefault}{m}{n}
\newcommand{\wt}{\widetilde}
\newcommand{\widebar}[1]{\mbox{\kern1.5pt\hbox{\vbox{\hrule height 0.6pt \kern0.35ex
        \hbox{\kern-0.15em \ensuremath{#1 }\kern0.0em}}}}\kern-0.1pt}
\newlength{\fixboxwidth}
\definecolor{darkgreen}{rgb}{0,0.5,0}
\definecolor{orange}{rgb}{1,0.4,0}
\title{Linear Monte Carlo quadrature\\ with optimal confidence intervals}
\date{\today}
\author{Robert J. Kunsch\thanks{RWTH Aachen University,
    Chair of Mathematics of Information Processing,
    Pontdriesch~10,
    52062 Aachen, Email: kunsch@mathc.rwth-aachen.de;
    TU Chemnitz, Faculty of Mathematics, Reichenhainer Str. 41, 09126 Chemnitz
  }
}
\begin{document}

\maketitle

\begin{abstract}
  We study the numerical integration of functions
  from isotropic Sobolev spaces $W_p^s([0,1]^d)$
  using finitely many function evaluations within randomized algorithms,
  aiming for the smallest possible probabilistic error guarantee $\eps > 0$
  at confidence level \mbox{$1-\delta \in (0,1)$}.
  For spaces consisting of continuous functions,
  non-linear Monte Carlo methods with optimal confidence properties have already been known,
  in few cases even linear methods that succeed in that respect.
  In this paper we promote a new method called \emph{stratified control variates} (SCV)
  and by it show that already linear methods achieve optimal probabilistic error rates
  in the high smoothness regime
  without the need to adjust algorithmic parameters to the uncertainty~$\delta$.
  We also analyse a version of SCV in the low smoothness regime
  where $W_p^s([0,1]^d)$ may contain functions with singularities.
  Here, we observe a polynomial dependence of the error on~$\delta^{-1}$
  which cannot be avoided for linear methods.
  This is worse than what is known to be possible using non-linear algorithms
  where only a logarithmic dependence on $\delta^{-1}$ occurs
  if we tune in for a specific value of $\delta$.
\end{abstract}

\textbf{Keywords.\;} Monte Carlo integration;
Sobolev functions;
information-based complexity;
linear methods;
asymptotic error;
confidence intervals.

\section{Introduction} \label{sec: intro}

We want to compute the integral
\begin{equation}   \label{eq:INT} 
  \Int f = \int_{G} f(\vec{x}) \dint \vec{x}
\end{equation} 
of $f\colon G \to \R$ from
a normed linear space $\Wspace$ of integrable functions 
defined on a domain $G \subset \R^d$,
in this paper we mainly restrict to the unit cube $G = [0,1]^d$.
The integral shall be approximated
using randomized \emph{linear} quadrature rules,
that is, measurable mappings
$Q_n \colon \Omega \times \Wspace \to \R$ of the shape
\begin{equation} \label{eq:Q=linMC}
  Q_n^{\omega}(f)
    = \sum_{i=1}^{n} w_i^{\omega} f(\vec{x}_i^{\omega}) \,,
\end{equation}
where $\omega \in \Omega$ represents the randomness
provided by a probability space $(\Omega,\Sigma,\P)$,
the nodes $\vec{x}_i^{\omega}$ shall be random variables on
the domain $G$,
and $w_i^{\omega} \in \R$ are suitable weights.
We study the \emph{probabilistic error} of such a method
for a given \emph{confidence level} $\delta \in (0,1)$ (or \emph{uncertainty} $\delta$)
and an input $f \in \Wspace$,
namely
\begin{equation} \label{eq:e(Q_n,del,f)}
  e(Q_n,\delta,f)
    := \inf \bigl\{\eps > 0 \colon \P(|Q_n(f) - \Int f| > \eps ) \leq \delta 
            \bigr\} \,.
\end{equation}
The accuracy of $Q_n$ on a given set $\Finput \subset \Wspace$ is defined via the worst case,
\begin{equation} \label{eq:e(Q_n,delta)}
  e(Q_n,\delta,\Finput)
    := \sup_{f \in \Finput} e(Q_n,\delta,f)\,,
\end{equation}
where the input set $\Finput$ is typically the unit ball of the function space $\Wspace$,
that is, $\Finput = \{f \in \Wspace \colon \|f\|_{\Wspace} \leq 1\}$,
in which case we will simply write $\Wspace$ instead of $\Finput$
on the left-hand side of \eqref{eq:e(Q_n,delta)}.
Alternatively, $\Finput$ could also consist of functions bounded by $1$ with respect to a semi-norm in $\Wspace$,
and indeed, the error bounds we prove in this paper equally hold for such larger input sets.
The cost of an algorithm \eqref{eq:Q=linMC} is dominated by the amount $n \in \N_0$
of function evaluations required by the method, the so-called \emph{cardinality}.
Aiming for the best possible approximation, 
we thus take the infimum over all linear methods $Q_n$ with cardinality~$n$,
\begin{equation} \label{eq:e^prob}
  e^{\linMC}_{\prob}(n,\delta,\Finput) := \inf_{Q_n} e_{\prob}(Q_n,\delta,\Finput) \,.
\end{equation}
If we allow general non-linear methods, we write $e^{\MC}_{\prob}(n,\delta,\Finput)$;
restricting to deterministic methods without $\omega$-dependence
leads to a quantity $e^{\deter}(n,\Finput)$.

The error criterion~\eqref{eq:e(Q_n,del,f)} is common in statistics,
usually expressed in terms of confidence intervals:
If $\eps \geq e(Q_n,\delta,\Finput) \in (0,\infty)$,
then \mbox{$[Q_n(f) - \eps, Q_n(f) + \eps]$} is a confidence interval 
at confidence level $1-\delta$
for the quantity $\Int(f)$, provided \mbox{$f \in \Finput$}.
In \emph{information-based complexity} (IBC), though,
the standard notion of \emph{Monte Carlo error}
is the root mean squared error
$\sqrt{\expect|Q_n(f) - \Int f|^2}$,
or simply the expected error~$\expect|Q_n(f) - \Int f|$,
see for instance~\cite{No88,TWW88}.
The paper~\cite{KNR18}, however, discussed an integration problem with non-convex input set
where positive results were only possible for the probabilistic error criterion,
it thus represents the more general yet challenging error criterion.
The subsequent publication \cite{KR19} seems to be the first thorough study
on classical integration problems in terms of the probabilistic error.
There, mainly isotropic Sobolev spaces~$W_p^s(G)$
on domains~$G \subseteq \R^d$ were considered. These are defined by
\begin{equation*}
  W_p^s(G)
    :=\biggl\{f \in L_p(G) \,\bigg|\,
              \|f\|_{W_p^s(G)}
                := \biggl(\sum_{\substack{\vec{\alpha} \in \N_0^d\\
                                   |\vec{\alpha}|_1 \leq s}}
                     \|D^{\vec{\alpha}} f\|_{L_p(G)}^p\biggr)^{1/p}
                < \infty
       \biggr\} \,,  
\end{equation*}
with integrability parameter $1 \leq p \leq \infty$
(with the usual modification for~\mbox{$p=\infty$})
and integer smoothness~$s \in \N_0$,
imposing Lebesgue integrability 
for weak partial derivatives
$D^{\vec{\alpha}}f = \partial_{x_1}^{\alpha_1} \cdots \partial_{x_d}^{\alpha_d} f$
for multi-indices $\vec{\alpha}=(\alpha_1,\dots,\alpha_d)\in\N_0^d$ 
with total degree $|\vec{\alpha}|_1 = \alpha_1 + \ldots + \alpha_d$ at most $s$.
Provided we have
sufficient smoothness, $s>d/p$,
hence guaranteeing continuity of functions, $W_p^s([0,1]^d) \embed C([0,1]^d)$,
the precise probabilistic error rate was determined in~\cite[Thm~4]{KR19}, namely
\begin{equation} \label{eq:erate-intro}
  e^{\MC}_{\prob}\left(n,\delta,W_p^s([0,1]^d)\right)
    \asymp
      \begin{cases}
        n^{-s/d} \cdot \min\left\{1,\, \left(\frac{\log \delta^{-1}}{n}\right)^{1-1/p} \right\}
          &\text{if } 1 \leq p < 2 \,, \\
        n^{-s/d} \cdot \min\left\{1,\, \sqrt{\frac{\log \delta^{-1}}{n}}\right\}
          &\text{if } p \geq 2 \,.
      \end{cases}
\end{equation}
These rates reveal the benefit from Monte Carlo compared to deterministic quadrature
in the regime $n \succ \log \delta^{-1}$.
Upper bounds were achieved by a variant of \emph{control variates} (CV):
Half of the information budget is spent on an~$L_{p'}$-approximation~$g$
of the function~$f$, where $p' := \min\{2,p\}$ and the integral $\Int g$ is known;
the other half of the budget is used on estimating the integral
of the $L_q$-residual~$f-g$ via a \emph{non-linear} Monte Carlo method,
namely, the \emph{median-of-means} (MoM),
which is a probability amplification scheme
applied to the standard Monte Carlo method, see~\eqref{eq:CV+MoM} for details.
For smoothness $s = 1$ however,
\emph{stratified sampling} was shown to achieve optimal rates
without the need for non-linear algorithmic features.
Instead of employing a probability amplification scheme like the median in some stage of the algorithm,
for the analysis of stratified sampling Hoeff\-ding's inequality was used,
one of the best known concentration inequalities in probability theory.
This raises the general question in which cases linear Monte Carlo methods
have the potential of optimal confidence properties.

In the present article we give a positive result stating
that linear Monte Carlo methods achieve optimal probabilistic error rates
for Sobolev spaces $W_p^s([0,1]^d)$ of continuous functions,
see Theorem~\ref{thm:linMC-cont}.
This is achieved by a new method we call \emph{stratified control variates} (SCV)
which combines the two aforementioned classical variance reduction techniques,
see Section~\ref{sec:UBs} for a detailed description.
The new approach exploits that a control variate $g$ for $f$ may locally have
much better approximation properties than globally,
say, the residual $f-g$ may be bounded on sub-domains of $[0,1]^d$
but on the whole domain we can only give an \mbox{$L_2$-bound} with considerable approximation rates.
Besides linearity and unbiasedness, SCV has yet another main advantage
over the combination of control variates with the median-of-means (CV+MoM),
namely, SCV exhibits optimal confidence properties universally for all uncertainty levels $\delta$,
whereas the non-linear approach CV+MoM contains a parameter that needs to be adjusted to $\delta$:
In the MoM stage we take the median of $k \approx 2 \log_2(2\delta)^{-1}$ repetitions
of the standard Monte Carlo method applied to the residual $f-g$,
see \cite[Sec~3]{KR19} for details.

We also analyse SCV for the low smoothness regime
where $W_p^s([0,1]^d)$ is not embedded in the space of continuous functions.
Here, the probabilistic error guarantees are worse than what is known to be possible with non-linear methods,
namely, for linear algorithms we find a polynomial dependence on $\delta^{-1}$.
In Section~\ref{sec:LBs} we show that this worse tail behaviour cannot be avoided for linear methods,
yet, the precise asymptotics of the optimal joint $(n,\delta)$-dependence remains vague.

We close the paper with some numerical experiments comparing different integration methods 
that use the same control variate,
see Section~\ref{sec:numerics}.
The results underscore the superiority of our new method.

\textbf{Asymptotic notation:}
  For functions $e,f \colon \N\times (0,1) \to \R$
  we use the notation
  $e(n,\delta) \preceq f(n,\delta)$,
  meaning that there is some $n_0\in\N$ and $\delta_0\in(0,1)$ such that
  $e(n,\delta) \leq c f(n,\delta)$
  for all $n \geq n_0$
  and $\delta \in (0,\delta_0)$
  with some constant~$c > 0$ that may depend on other parameters.
  Asymptotic equivalence $e(n,\delta) \asymp f(n,\delta)$
  is a shorthand for \mbox{$e(n,\delta)\preceq f(n,\delta) \preceq e(n,\delta)$}.

\section{Stratified control variates}
\label{sec:UBs}

This section is devoted to upper bounds for the numerical integration
of functions from isotropic Sobolev spaces $W_p^s([0,1]^d)$,
where $1 \leq p \leq \infty$ and $d,s \in \N$.
This is done by analysing a new algorithm we call \emph{stratified control variates}
which combines the well known ideas of control variates
and stratified sampling.

For the analysis we consider the usual Sobolev semi-norms
\begin{equation} \label{eq:seminorm}
  |f|_{W_p^s(G)} := \biggl(\sum_{\substack{\vec{\alpha} \in \N_0^d\\
      |\vec{\alpha}|_1 = s}}
  \|D^{\vec{\alpha}} f\|_{L_p(G)}^p\biggr)^{1/p} \,.
\end{equation}
This semi-norm $|f|_{W_p^s(G)}$ is finite if and only if $f \in W_p^s(G)$,
its zero set is the space $\mathcal{P}^s(\R^d)$
of $d$-variate polynomials of total degree smaller than $s$,
that is, $|g|_{W_p^s(G)} = 0$ precisely for $g \in \mathcal{P}^s(G)$.
This polynomial space has the dimension
\begin{equation} \label{eq:n0}
  n_0 = n_0(s,d) := \dim \mathcal{P}^s(\R^d) = \binom{s+d-1}{d} \,.
\end{equation}
Piecewise interpolation with such polynomials will
provide the control variate. 
There exists a so-called \emph{$s$-regular set}
$\mathcal{X} = \mathcal{X}^s := \{\vec{x}_j\}_{j=1}^{n_0} \subset [0,1]^d$
such that for any given values $\left(y_j\right)_{j=1}^{n_0} \subset \R$
there is exactly one polynomial $g \in \mathcal{P}^s(\R^d)$ satisfying $g(\vec{x}_j) = y_j$ for all $j=1,\ldots,n_0$,
see for instance~\cite[Appendix~A]{Vyb07}.
When dealing with continuous functions, i.e.~$W_p^s([0,1]^d) \embed C([0,1]^d)$,
which holds for $s > d/p$, 
see~\cite[Thm~4.12~(1)]{AF03},
a key observation is that
\begin{equation} \label{eq:altnormC}
  \|f\|_{W_p^s([0,1]^d)}' := \sum_{j=1}^{n_0} |f(\vec{x}_j)| + |f|_{W_p^s([0,1]^d)}
\end{equation}
is an equivalent norm on $W_p^s([0,1]^d)$, see, for instance \cite[eq~(3.1.11)]{Cia78}.
We write $P_{\mathcal{X}}f$ for the polynomial $g \in \mathcal{P}^s(\R^d)$
that interpolates the function values of $f$ on $\mathcal{X}$,
i.e.\ for all $j=1,\ldots,n_0$ we match $g(\vec{x}_j) = f(\vec{x}_j)$.
For the difference $f-P_{\mathcal{X}}f$ we have 
\begin{equation} \label{eq:interpolWnorm}
  \|f-P_{\mathcal{X}}f\|_{W_p^s([0,1]^d)}' = |f|_{W_p^s([0,1]^d)} \,.
\end{equation}
Since we assumed $W_p^s([0,1]^d)$ to be continuously embedded into the space of continuous functions,
there exists a constant $c_{p,\infty}^{s,d} > 0$ such that
\begin{equation} \label{eq:interpolLooErr}
  \|f-P_{\mathcal{X}}f\|_{L_\infty([0,1]^d)} \leq c_{p,\infty}^{s,d} \, |f|_{W_p^s([0,1]^d)} \,.
\end{equation}
If, however, the function spaces contains discontinuous functions and we only have
the embedding
$W_p^s([0,1]^d) \embed L_q([0,1]^d)$ for some $q \in (p, \infty)$,
namely for $\frac{1}{q} \geq \frac{1}{p} - \frac{s}{d}$, see~\cite[Thm~4.12, eq~(5)]{AF03},
we need a randomly shifted point set
\begin{equation} \label{eq:X_xi}
  \mathcal{X}_{\vec{\xi}} := \left\{\frac{1}{2}\left(\vec{x}_j + \vec{\xi}\right)\right\}_{j=0}^{n_0} \subset [0,1]^d
  \qquad\text{with } \vec{\xi} \sim \Uniform([0,1]^d) \,.
\end{equation}
From Heinrich~\cite[eq~(21)]{He08SobI} we know that for such a randomized interpolation we have the bound
\begin{equation} \label{eq:interpolLqErr}
  \left(\expect\|f - P_{\mathcal{X}_{\vec{\xi}}} f\|_{L_q([0,1]^d)}^q\right)^{1/q}
    \leq c_{p,q}^{s,d}\, |f|_{W_p^s([0,1]^d)} \,,
\end{equation}
with a suitable constant $c_{p,q}^{s,d} > 0$.
In what follows we describe the algorithm with randomly shifted point set $\mathcal{X}_{\vec{\xi}}$,
keeping in mind that for spaces of continuous functions the deterministic point set $\mathcal{X}$ can do the job as well.

The semi-norm $|\cdot|_{W_p^s(G)}$ satisfies two important properties that are important
when decomposing the domain $[0,1]^d$ into $m^d$ essentially disjoint sub-cubes
\begin{equation} \label{eq:subcube}
G_{\vec{i}} = G_{m,\vec{i}} := \prod_{j=1}^d \left[\frac{i_j}{m} , \frac{i_j + 1}{m}\right]
\end{equation}
for $\vec{i} \in \{0,\ldots,m-1\}^d =: [0:m)^d$, namely, the decomposition property
\begin{equation} \label{eq:|decomp|}
|f|_{W_p^s([0,1]^d)}
= \left\|\left(|f|_{W_p^s(G_{\vec{i}})}\right)_{\vec{i} \in [0:m)^d}\right\|_{\ell_p}\,,
\end{equation}
and the scaling property
\begin{equation} \label{eq:scaling}
\left|f \circ \Phi_{m,\vec{i}}\right|_{W_p^s([0,1]^d)} = m^{d/p - s} |f|_{W_p^s(G_{\vec{i}})}
\quad\text{where}\quad
\Phi_{m,\vec{i}}(\vec{x}) := \frac{\vec{x}+\vec{i}}{m} \,.
\end{equation}
We define (randomized) interpolates of $f$ on the sub-cubes $G_{\vec{i}}$ via
\begin{equation} \label{eq:g_i}
  g_{\vec{i},\vec{\xi}} := \bigl(P_{\mathcal{X}_{\vec{\xi}}}(f \circ \Phi_{m,\vec{i}})\bigr) \circ \Phi_{m,\vec{i}}^{-1} \,.
\end{equation}
The integrals of the local interpolation polynomials,
\begin{equation} \label{eq:a_i}
  a_{\vec{i}}
    := \int_{[0,1]^d} P_{\mathcal{X}_{\vec{\xi}}}(f \circ \Phi_{m,\vec{i}}) \dint\vec{x}
    = m^d \int_{G_{\vec{i}}} g_{\vec{i},\vec{\xi}}(\vec{x}) \dint\vec{x} \,,
\end{equation}
can be computed exactly and provide initial approximations
for the mean value of~$f$ on the sub-cubes $G_{\vec{i}}$.
Finally, we estimate the residual via stratified sampling
by taking independent samples $\vec{X}_{\vec{i}}^{(j)} \sim \Uniform(G_{\vec{i}})$ that are also independent from $\vec{\xi}$,
\emph{stratified control variates} (SCV) is then defined as the linear and unbiased method
\begin{equation} \label{eq:SCV}
  A_{m,s}^{\textup{SCV}}(f)
    := \frac{1}{m^d} \sum_{\vec{i} \in [0:m)^d}
          \left(a_{\vec{i},\vec{\xi}} 
                  + \frac{1}{n_0(s,d)} \sum_{j=1}^{n_0(s,d)}
                      [f - g_{\vec{i},\vec{\xi}}](\vec{X}_{\vec{i}}^{(j)})
          \right) \,.
\end{equation}
This method requires $n = 2 n_0 m^d$ function evaluations of $f$.
The decision to take $n_0$ random samples on each sub-cube $G_{\vec{i}}$
follows the usual heuristic for control variates to evenly distribute the information budget
on the two stages of approximating $f$ and of estimating the residual.
Stratified control variates can be described as the strategy of averaging
the means of $f$ on subcubes $G_{\vec{i}}$, the so-called \emph{strata},
where the means on the individual strata $G_{\vec{i}}$ are approximated via
a basic control variates method with fixed cardinality.
Note that we use the same shift $\vec{\xi}$ on all subcubes.

The first theorem gives upper bounds for spaces of continuous functions.

\begin{theorem} \label{thm:linMC-cont}
  Let $s,d \in \N$ and $1 \leq p \leq \infty$ with $s > d/p$.
  Then there exists a family $(Q_n)_{n \in \N}$
  of linear randomized quadrature rules 
  that achieve optimal probabilistic error rates,
  namely, for $n \geq 2n_0(s,d)$ and all $\delta \in (0,\frac{1}{4})$ we have
  \begin{align*}
    e\left(Q_n,\delta,W_p^s([0,1]^d)\right)
      &\asymp e_{\prob}^{\linMC}\left(n,\delta,W_p^s([0,1]^d)\right)
      \asymp e_{\prob}^{\MC}\left(n,\delta,W_p^s([0,1]^d)\right) \\
      &\asymp
        \begin{cases}
          n^{-s/d} \cdot \min\left\{1,\, \left(\frac{\log \delta^{-1}}{n}\right)^{1-1/p} \right\}
            &\text{if } 1 \leq p < 2 \,, \\
          n^{-s/d} \cdot \min\left\{1,\, \sqrt{\frac{\log \delta^{-1}}{n}}\right\}
            &\text{if } p \geq 2 \,.
        \end{cases}    
  \end{align*}
\end{theorem}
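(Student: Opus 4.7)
The matching lower bound $e_{\prob}^{\MC}\succeq$ RHS is already recorded in~\eqref{eq:erate-intro} (from~\cite[Thm~4]{KR19}), and $e_{\prob}^{\MC} \leq e_{\prob}^{\linMC} \leq e(Q_n,\delta,\Finput)$ holds trivially for any linear~$Q_n$. The plan is therefore to exhibit one linear method and bound its error above by the target; I would take $Q_n := A_{m,s}^{\textup{SCV}}$ with $m := \lfloor (n/(2n_0(s,d)))^{1/d}\rfloor$. Since $s>d/p$ ensures continuity, the deterministic interpolation set $\mathcal{X}$ suffices (no random shift~$\vec{\xi}$ needed). The error $S := A_{m,s}^{\textup{SCV}}(f) - \Int f$ takes the form $\sum_{\vec{i},j} \frac{1}{m^d n_0} Y_{\vec{i}}^{(j)}$ where the $Y_{\vec{i}}^{(j)}$ are independent, mean-zero, and bounded by $|Y_{\vec{i}}^{(j)}| \leq 2 b_{\vec{i}}$ with
\[
  b_{\vec{i}} := c_{p,\infty}^{s,d} \, m^{d/p-s} \, |f|_{W_p^s(G_{\vec{i}})}
\]
by the local interpolation estimate~\eqref{eq:interpolLooErr} combined with the scaling~\eqref{eq:scaling}. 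The key global ingredient is the $\ell_p$-control $\sum_{\vec{i}} b_{\vec{i}}^p \preceq m^{d-sp}\,|f|_{W_p^s}^p$ inherited from the decomposition property~\eqref{eq:|decomp|}. A first Hölder step also yields the deterministic cap $|S| \leq 2\sum_{\vec{i}} b_{\vec{i}}/m^d \preceq m^{-d/p}\|b\|_{\ell_p} \preceq n^{-s/d}|f|_{W_p^s}$, which produces the $\min\{1,\,\cdot\,\}$ form and covers the regime $\log\delta^{-1}\geq n$.

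For $p\geq 2$, one more Hölder step $\sum_{\vec{i}} b_{\vec{i}}^2 \leq m^{d(1-2/p)}(\sum_{\vec{i}} b_{\vec{i}}^p)^{2/p} \preceq m^{d-2s}|f|_{W_p^s}^2$ plugged into Hoeffding's inequality applied to~$S$ gives $\P(|S|>\eps) \leq 2\exp(-c\,\eps^2\,n_0\,m^{d+2s}/|f|_{W_p^s}^2)$ and hence $\eps \preceq n^{-s/d}\sqrt{\log\delta^{-1}/n}\,|f|_{W_p^s}$, matching the target. This is a direct generalisation to higher~$s$ of the stratified-sampling analysis from~\cite{KR19} for $s=1$.

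For $1\leq p<2$ the target tail $(\log\delta^{-1}/n)^{1-1/p}$ is strictly sharper than the Gaussian factor $\sqrt{\log\delta^{-1}/n}$ that plain Hoeffding (or Bernstein) can deliver from only the $\ell_p$-bound on $(b_{\vec{i}})$, and this is the step I expect to be the main obstacle. My plan is a threshold-splitting argument. Pick a threshold $T>0$ (depending on~$f$ and~$\delta$, but only for the analysis; the algorithm is unchanged) and decompose $S = S_\leq + S_>$ according to whether $b_{\vec{i}}\leq T$ or $b_{\vec{i}}>T$. On $I_\leq$, the per-variable bound $2T/(m^d n_0)$ together with $\sum_{\vec{i}\in I_\leq} b_{\vec{i}}^2 \leq T^{2-p}\|b\|_{\ell_p}^p$ gives by Hoeffding
\[
  |S_\leq| \;\preceq\; T^{(2-p)/2}\,\|b\|_{\ell_p}^{p/2}\,\sqrt{\log\delta^{-1}}/(m^d\sqrt{n_0})
  \quad\text{with probability}\geq 1-\delta,
\]
while on $I_>$ the deterministic worst-case bound $|S_>| \leq 2\sum_{\vec{i}\in I_>} b_{\vec{i}}/m^d \leq 2\,T^{1-p}\|b\|_{\ell_p}^p/m^d$ always holds. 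Choosing $T \asymp \|b\|_{\ell_p}/(\log\delta^{-1})^{1/p}$ balances the two contributions at $\asymp \|b\|_{\ell_p}(\log\delta^{-1})^{1-1/p}/m^d$; inserting $\|b\|_{\ell_p}\preceq m^{d/p-s}|f|_{W_p^s}$ and $m^d\asymp n$ reproduces exactly the desired $n^{-s/d}(\log\delta^{-1}/n)^{1-1/p}|f|_{W_p^s}$. The exponent $1-1/p$ emerges precisely from this Hoeffding-versus-deterministic trade-off and cannot be obtained from either tool alone; the argument exploits the Sobolev $\ell_p$-decomposition simultaneously on the tame truncated strata (variance-efficient) and on the few strata with large residuals (deterministically controlled). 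The remaining items---constants, the range $\delta\in(0,\tfrac14)$, and reducing the unit norm ball to the semi-norm ball mentioned in the introduction---are routine bookkeeping.
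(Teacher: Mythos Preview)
Your argument is correct and matches the paper's proof in all essential respects: SCV with deterministic interpolation, the local bounds~$b_{\vec{i}}$ via~\eqref{eq:interpolLooErr} and~\eqref{eq:scaling}, the worst-case cap from H\"older, and plain Hoeffding for $p\geq 2$. For $1\leq p<2$ the paper carries out exactly your large/small split but packages it as a standalone $p$-norm Hoeffding inequality (Lemma~\ref{lem:pnorm-Hoeffding}), proved via a top-$k$ head/tail decomposition with $k=\lfloor 2\log(2/\delta)\rfloor$; your threshold choice $T\asymp\|b\|_{\ell_p}(\log\delta^{-1})^{-1/p}$ is precisely the dual parameterisation of that~$k$, and the deterministic control on $I_>$ plus Hoeffding on $I_\leq$ is the same trade-off the lemma encodes.
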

\begin{proof}
  Lower bounds for arbitrary methods and $\delta \in (0,\frac{1}{4})$ can be found in \cite[Thm~1]{KR19}.
  For upper bounds we employ the SCV algorithm \eqref{eq:SCV}
  with $m := \bigl\lfloor (\frac{n}{2n_0})^{1/d} \bigr\rfloor$.
  
  The assumption $s/d > 1/p$ implies that $W_p^s([0,1]^d) \embed C([0,1]^d)$,
  so we may confine ourselves to interpolation with a fixed point set $\mathcal{X}$
  and use a scaled version of~\eqref{eq:interpolLooErr}
  with deterministic interpolants $g_{\vec{i}}$ in place of~\eqref{eq:g_i}.
  In detail, using the scaling property~\eqref{eq:scaling}, we get
  \begin{align}
    b_{\vec{i}}
      &:= \|f - g_{\vec{i}}\|_{L_{\infty}(G_{\vec{i}})} \nonumber\\
      &= \|f \circ \Phi_{m,\vec{i}} - P_{\mathcal{X}}(f \circ \Phi_{m,\vec{i}})\|_{L_\infty([0,1]^d)} \nonumber\\
      &\leq c_{p,\infty}^{s,d} \, |f \circ \Phi_{m,\vec{i}}|_{W_p^s([0,1]^d)} \nonumber\\
      &= c_{p,\infty}^{s,d} \, m^{-(s-d/p)} \, |f|_{W_p^s(G_{\vec{i}})} \,. \label{eq:b_i}
  \end{align}
  First, these bounds give a worst case bound on the error of SCV, namely, in combination with \eqref{eq:|decomp|} we estimate
  \begin{align}
    \left|A_{m,s}^{\textup{SCV}}(f) - \Int f \right|
      &\leq 2m^{-d}\sum_{\vec{i} \in [0:m)^d} b_i \nonumber\\
      &\leq 2m^{-d/p}\left(\sum_{\vec{i} \in [0:m)^d} b_i^p\right)^{1/p} \nonumber\\
      &\leq 2 c_{p,\infty}^{s,d} \, m^{- s} \, |f|_{W_p^s([0,1]^d)} \,. \label{eq:wor}
  \end{align}
  
  The bounds \eqref{eq:b_i} on the range of the residual estimators also play a role in Hoeffding's inequality
  by which we find the probabilistic bound
  \begin{equation*}
    \P\left\{\left|A_{m,s}^{\textup{SCV}}(f) - \Int f \right| > \eps \right\}
        \leq 2 \exp\left(-\frac{n_0^2 m^{2d} \eps^2}{2 \sum\limits_{\vec{i} \in [0:m)^d} b_{\vec{i}}^2}\right) 
        \stackrel{!}{=} \delta \,.
  \end{equation*}
  Resolving for $\eps$, we obtain the error bound
  \begin{equation*}
    \eps = \frac{1}{n_0 m^d} \cdot \sqrt{2 \log \frac{2}{\delta}} \cdot \sqrt{\sum_{\vec{i} \in [0:m)} b_{\vec{i}}^2} \,.
  \end{equation*}
  This is useful for $p \geq 2$, where with \eqref{eq:|decomp|} and \eqref{eq:b_i} we estimate
  \begin{equation*}
    \left(\sum_{\vec{i} \in [0:m)^d} b_{\vec{i}}^2\right)^{1/2}
      \leq m^{d(1/2-1/p)} \left(\sum_{\vec{i} \in [0:m)^d} b_{\vec{i}}^p\right)^{1/p}
      \leq c_{p,\infty}^{s,d} \, m^{-(s-d/2)} \, |f|_{W_p^s([0,1]^d)} \,,
  \end{equation*}
  For $1 \leq p < 2$ we use an alternative Hoeffding type inequality, see Lemma~\ref{lem:pnorm-Hoeffding}, providing the error bound
  \begin{equation*}
    \eps 
      = \frac{3}{n_0 m^d} \cdot \left(2 \log \frac{2}{\delta}\right)^{1-1/p} \cdot \left(\sum_{\vec{i} \in [0:m)} b_{\vec{i}}^p\right)^{1/p} \,,
  \end{equation*}
  where \eqref{eq:|decomp|} and \eqref{eq:b_i}, again,
  imply an appropriate relation to the Sobolev norm,
  \begin{equation*}
    \left(\sum_{\vec{i} \in [0:m)^d} b_{\vec{i}}^p\right)^{1/p}
      \leq c_{p,\infty}^{s,d} \, m^{-(s-d/p)} \, |f|_{W_p^s([0,1]^d)} \,.
  \end{equation*}
  Writing $p' := \min\{2,p\}$ to accommodate both regimes,
  for $f$ from the unit ball of $W_p^s([0,1]^d)$ we thus get
  the following bound holding with probability $1-\delta \in (0,1)$:
  \begin{equation*}
    \left|A_{m,s}^{\textup{SCV}}(f) - \Int f \right|
      \leq \frac{c_{p,\infty}^{s,d}}{n_0} \cdot m^{-s-d(1-1/p')} 
        \cdot \left(2 \log \frac{2}{\delta}\right)^{1-1/p'} \,.
  \end{equation*}
  With $n \asymp m^d$ we achieve the randomized rates as claimed,
  where \eqref{eq:wor} is the bound we fall back to for very small~$\delta$.
\end{proof}

\begin{remark}[Stratified sampling]
  For spaces $W_p^s([0,1]^d)$ of smoothness $s=1$ and integrability $d > p$,
  stratified sampling without control variates
  already yields optimal probabilistic error rates.
  This was shown in \cite[Thm~6]{KR19}
  for integrability $p \geq 2$,
  in the case of low integrability $1 < p < 2$, though,
  namely spaces $W_p^1([0,1])$ of univariate functions,
  a gap in the power of $\log \delta^{-1}$ remained open.
  With the help of the new $p$-norm version of Hoeffding's inequality, see Lemma~\ref{lem:pnorm-Hoeffding},
  this gap can be closed with the lines following the proof of Theorem~\ref{thm:linMC-cont},
  implying that stratified sampling is optimal in that case as well.
\end{remark}

The second theorem gives probabilistic integration rates for spaces of functions that are not necessarily continuous.

\begin{theorem} \label{thm:s<d/p}
  Let $s,d \in \N$ and $1 \leq p < \infty$ with $s < d/p$.
  Then there exists a family $(Q_n)_{n \in \N}$
  of linear randomized quadrature rules 
  that for $n \geq 2n_0(s,d)$ and all $\delta \in (0,1)$
  achieve the following probabilistic error rates:
  \begin{align*}
    e_{\prob}^{\linMC}\left(n,\delta,W_p^s([0,1]^d)\right)
      &\leq e\left(Q_n,\delta,W_p^s([0,1]^d)\right) \\
      &\preceq \begin{cases}
                n^{-\left(\frac{s}{d} + 1 - \frac{1}{p}\right)} \cdot \delta^{-\left(\frac{1}{p} - \frac{s}{d}\right)}
                  &\text{if } 1 \leq p \leq 2 \,, \\
                n^{-\left(\frac{s}{d} + \frac{1}{2}\right)} \cdot \delta^{-\left(\frac{1}{p} - \frac{s}{d}\right)}
                  &\text{if } 2 \leq p < \infty \,.
              \end{cases}
  \end{align*}
\end{theorem}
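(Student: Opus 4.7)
The plan is to apply the SCV method~\eqref{eq:SCV} with the randomly shifted interpolation nodes $\mathcal{X}_{\vec{\xi}}$ and mesh size $m \asymp (n/(2n_0))^{1/d}$, replacing Hoeffding's inequality from the continuous case with $q$-th moment bounds combined with Markov's inequality. Since we now only have an averaged $L_q$-bound for the local residual $h_{\vec{i}} := f - g_{\vec{i},\vec{\xi}}$ with critical Sobolev exponent $\tfrac{1}{q} = \tfrac{1}{p} - \tfrac{s}{d}$, the scaled version of Heinrich's estimate~\eqref{eq:interpolLqErr} together with~\eqref{eq:scaling} gives, for $\vec{X}_{\vec{i}}^{(j)} \sim \Uniform(G_{\vec{i}})$ independent of $\vec{\xi}$,
\begin{equation*}
  \bigl(\expect\, |h_{\vec{i}}(\vec{X}_{\vec{i}}^{(j)})|^q\bigr)^{1/q}
    \leq c_{p,q}^{s,d}\, m^{d/q}\, |f|_{W_p^s(G_{\vec{i}})} \,.
\end{equation*}

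The total error $E := A_{m,s}^{\textup{SCV}}(f) - \Int f$ can be written as $\frac{1}{m^d n_0} \sum_{\vec{i}, j} Z_{\vec{i}, j}$ with mean-zero increments $Z_{\vec{i}, j} := h_{\vec{i}}(\vec{X}_{\vec{i}}^{(j)}) - \expect[h_{\vec{i}}(\vec{X}_{\vec{i}}^{(j)}) \mid \vec{\xi}]$ that are, conditionally on $\vec{\xi}$, independent. My strategy is to bound the $q$-th moment of $E$ and then apply Markov's inequality $\P(|E| > \eps) \leq \eps^{-q}\, \expect|E|^q$; setting the right-hand side equal to $\delta$ and solving for $\eps$ produces precisely the polynomial factor $\delta^{-1/q} = \delta^{-(1/p - s/d)}$ that appears in the statement, leaving only the correct $m$-dependence of $(\expect|E|^q)^{1/q}$ to verify.

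To bound this moment I distinguish cases according to the position of $q$ relative to $2$. When $q \geq 2$ -- which is automatic for $p \geq 2$ -- I apply Rosenthal's inequality conditionally on $\vec{\xi}$, splitting the $q$-th moment into a variance term and a $q$-th moment term. A variance bound $\expect Z_{\vec{i}, j}^2 \leq C\, m^{2d/q}\, |f|_{W_p^s(G_{\vec{i}})}^2$ follows from the displayed $q$-moment bound by Jensen; aggregating over sub-cubes via~\eqref{eq:|decomp|} together with the appropriate finite-dimensional inequalities between $\ell_p$, $\ell_2$, and $\ell_q$, a routine calculation shows that the variance term dominates in the regime $p \geq 2$, producing the rate $m^{-(s + d/2)}$, whereas for $1 \leq p < 2$ with $q \geq 2$ the $q$-th moment term dominates, producing the rate $m^{-(s + d - d/p)}$. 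In the remaining regime $1 \leq p < 2$ with $q < 2$, where Rosenthal is unavailable, I invoke a Marcinkiewicz--Zygmund-type inequality $\expect|\sum_i X_i|^q \leq C_q\, \sum_i \expect|X_i|^q$ for independent mean-zero $X_i$ and $q \in [1,2]$ (in the spirit of Lemma~\ref{lem:pnorm-Hoeffding}); this again yields $(\expect|E|^q)^{1/q} \preceq m^{-(s + d - d/p)}\, |f|_{W_p^s([0,1]^d)}$.

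The main obstacle is orchestrating this case distinction and verifying, in each of the three sub-cases, that the intended term of Rosenthal's (or of the type-$q$) inequality actually dominates; this is precisely where the split between the exponents $\tfrac{s}{d} + 1 - \tfrac{1}{p}$ and $\tfrac{s}{d} + \tfrac{1}{2}$ in the statement originates. A secondary technical point is that the random shift $\vec{\xi}$ is shared across all sub-cubes, so genuine independence of the $Z_{\vec{i}, j}$ only holds conditionally on $\vec{\xi}$; this couples cleanly with the argument because Heinrich's bound was taken in full expectation from the outset, so conditioning on $\vec{\xi}$, invoking the moment inequality, and then averaging over $\vec{\xi}$ commute up to universal constants.
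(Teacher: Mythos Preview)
Your proposal is correct and follows the same overall strategy as the paper: SCV with randomly shifted interpolation, a $q$-th moment bound on the error for the critical exponent $\frac{1}{q}=\frac{1}{p}-\frac{s}{d}$, and then Markov's inequality to extract the factor $\delta^{-1/q}$.

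The one noteworthy difference is in the moment inequality. You plan to invoke Rosenthal's inequality for $q\geq 2$, with its two-term $\max$ structure, and a separate von~Bahr--Esseen/Marcinkiewicz--Zygmund bound for $q<2$; this forces the three-way case split you describe. The paper instead packages both regimes into a single estimate (Lemma~\ref{lem:MZ-ineq}), derived from the Marcinkiewicz--Zygmund inequality together with the Minkowski-in-$L_{q/2}$ step of Lemma~\ref{lem:RV-triangle-ineq}, which directly yields the $\ell_{q'}$-sum of the $\|Z_{\vec{i},j}\|_{L_q}$ with $q'=\min\{2,q\}$ and so avoids comparing the two Rosenthal terms. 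In effect the paper's lemma is the streamlined version of your plan. Two small corrections: your pointer to Lemma~\ref{lem:pnorm-Hoeffding} is off---that lemma concerns bounded random variables, and the MZ-type bound you need is Lemma~\ref{lem:MZ-ineq}; and for $1\leq p<2$ with $q\geq 2$ the two Rosenthal terms in fact yield the \emph{same} rate (both $\ell_2$- and $\ell_q$-sums of the local seminorms are dominated by the $\ell_p$-sum), so neither strictly ``dominates''. Finally, the ``secondary technical point'' about the shared shift $\vec{\xi}$ is precisely where the paper applies Lemma~\ref{lem:RV-triangle-ineq} to pass from $\expect^{\vec{\xi}}\bigl(\sum_{\vec{i}}\|h_{\vec{i}}\|_{L_q}^{q'}\bigr)^{q/q'}$ to $\bigl(\sum_{\vec{i}}(\expect^{\vec{\xi}}\|h_{\vec{i}}\|_{L_q}^q)^{q'/q}\bigr)^{q/q'}$; this step is not automatic and should be made explicit rather than asserted to ``commute up to constants''.
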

\begin{proof}
  We employ the SCV algorithm~\eqref{eq:SCV} with $m := \bigl\lfloor (\frac{n}{2n_0})^{1/d} \bigr\rfloor$
  and randomized interpolation.
  In what follows, we write $\expect^{\vec{\xi}}$ for the expectation that averages over the random shift $\vec{\xi}$,
  while $\expect[\,\cdot\,| \vec{\xi}]$ is the conditional expectation for fixed $\vec{\xi}$,
  hence, producing a random variable derived from $\vec{\xi}$.
  
  For $q > p$ with $\frac{1}{q} = \frac{1}{p} - \frac{s}{d} > 0$
  the embedding $W_p^s([0,1]^d) \embed L_q([0,1]^d)$ holds
  and we may apply~\eqref{eq:interpolLqErr}.
  Using randomized interpolants $g_{\vec{i},\vec{\xi}}$ on sub-domains $G_{\vec{i}}$, see~\eqref{eq:g_i},
  the scaling property~\eqref{eq:scaling} leads to
  \begin{align}
    \left(\expect^{\vec{\xi}} \|f - g_{\vec{i},\vec{\xi}}\|_{L_q(G_{\vec{i}})}^q\right)^{1/q}
      &= m^{-d/q} \left(\expect^{\vec{\xi}} \|f\circ \Phi_{m,\vec{i}} - P_{\mathcal{X}_{\vec{\xi}}}(f\circ \Phi_{m,\vec{i}})
                                    \|_{L_q(G_{\vec{i}})}^q
                     \right)^{1/q} \nonumber\\
      &\leq c_{p,q}^{s,d} \, m^{-d/q} \, |f \circ \Phi_{m,\vec{i}}|_{W_p^s([0,1]^d)} \nonumber\\
      &= c_{p,q}^{s,d} \, m^{d\left(\frac{1}{p} - \frac{1}{q}\right) - s} \, |f|_{W_p^s(G_{\vec{i}})} \,. \label{eq:c_i}
  \end{align}
  
  In order to estimate the $L_q$-error of the SCV algorithm,
  we need to understand the $q$-th moment of the random variables
  $Z_{\vec{i},\vec{\xi}}^{(j)} := [f - g_{\vec{i},\vec{\xi}}](\vec{X}_{\vec{i}}^{(j)})$
  with 
  \begin{equation*}
    \left(\expect\left[\bigl|Z_{\vec{i},\vec{\xi}}^{(j)}\bigr|^q \middle| \vec{\xi}\right]\right)^{1/q}
      = m^{d/q} \, \|f - g_{\vec{i},\vec{\xi}}\|_{L_q(G_{\vec{i}})} \,.
  \end{equation*}
  For any fixed random shift $\vec{\xi}$ these random variables are independent
  and the algorithm is unbiased, hence, we can
  apply Lemma~\ref{lem:MZ-ineq}.
  Writing $q' := \min\{2,q\}$ we obtain
  \begin{align}
    &\left(\expect\Bigl[\bigl|A_{m,s}^{\textup{SCV}}(f) - \Int f\bigr|^q \Big| \vec{\xi}\right]\Bigr)^{1/q} \nonumber\\
      &\leq \frac{c_q}{n_0 m^d}
              \left(\sum_{\vec{i}\in[0:m)^d} \sum_{j=1}^{n_0}
                      \left(\expect\left[\bigl|Z_{\vec{i},\vec{\xi}}^{(j)}\bigr|^q \middle| \vec{\xi}\right]\right)^{q'/q}
              \right)^{1/{q'}} \nonumber\\
      &= c_q
          \cdot n_0^{-(1-1/q')} \cdot m^{-d(1-1/q)}
          \, \left(\sum_{\vec{i}\in[0:m)^d} \|f - g_{\vec{i},\vec{\xi}}\|_{L_q(G_{\vec{i}})}^{q'}
              \right)^{1/q'} \,.
        \label{eq:q-moment|xi}
  \end{align}
  We also need the $q$-th moment of~\eqref{eq:q-moment|xi}
  with respect to the expectation over the random shift $\vec{\xi}$
  in order to establish relations to the local semi-norms via~\eqref{eq:c_i}.
  For $q' = q \leq 2$ this causes no problems,
  but for $q' = 2 < q$ it requires the use of Lemma~\ref{lem:RV-triangle-ineq}
  with random variables $Y_{\vec{i}} := \|f - g_{\vec{i},\vec{\xi}}\|_{L_q(G_i)}$,
  thus
  \begin{multline}
  \left(\expect^{\vec{\xi}} \left(\sum_{\vec{i}\in[0:m)^d}
    \|f - g_{\vec{i},\vec{\xi}}\|_{L_q(G_{\vec{i}})}^{q'}\right)^{q/q'}
  \right)^{1/q}
    \leq \left(\sum_{\vec{i}\in[0:m)^d}
          \left(\expect^{\vec{\xi}} \|f - g_{\vec{i},\vec{\xi}}\|_{L_q(G_{\vec{i}})}^q
          \right)^{q'/q}
    \right)^{1/q'} \\
    \leq c_{p,q}^{s,d} \, m^{d\left(\frac{1}{p} - \frac{1}{q}\right) - s}
              \left(\sum_{\vec{i}\in[0:m)^d} |f|_{W_p^s(G_{\vec{i}})}^{q'}
      \right)^{1/q'} \,.
      \label{eq:ell_q'-sum}
  \end{multline}
  Consider $\vec{x} \in \R^{M}$.
  If $p < 2$, we have $p < q'$, hence, $\|\vec{x}\|_{\ell_{q'}} \leq \|\vec{x}\|_{\ell_p}$.
  If $p \geq 2$, we have $q' = 2$, hence, 
  $\|\vec{x}\|_{\ell_{q'}} = \|\vec{x}\|_{\ell_2} \leq M^{\left(\frac{1}{2} - \frac{1}{p}\right)} \|\vec{x}\|_{\ell_p}$.
  Writing $p' := \min\{2,p\}$, with 
  $\vec{x} = \bigl(|f|_{W_p^s(G_{\vec{i}})}\bigr)_{\vec{i} \in [0:m)^d}$
  and $M = m^d$, and with the help of the decomposition property~\eqref{eq:|decomp|},
  we estimate
  \begin{align}
    \left(\sum_{\vec{i}\in[0:m)^d} |f|_{W_p^s(G_{\vec{i}})}^{q'}
    \right)^{1/q'}
      &\leq m^{d\left(\frac{1}{p'} - \frac{1}{p}\right)}
              \left(\sum_{\vec{i}\in[0:m)^d} |f|_{W_p^s(G_{\vec{i}})}^{p}
              \right)^{1/p} \nonumber\\
      &= m^{d\left(\frac{1}{p'} - \frac{1}{p}\right)} \,
            |f|_{W_p^s([0,1]^d)} \,. \label{eq:ell_q'_vs_seminorm}
  \end{align}
  For inputs from the unit ball, that is, $|f|_{W_p^s([0,1]^d)} \leq \|f\|_{W_p^s([0,1]^d)} \leq 1$,
  combining \eqref{eq:q-moment|xi}, \eqref{eq:ell_q'-sum}, and \eqref{eq:ell_q'_vs_seminorm},
  we then have
  \begin{align*}
    \left(\expect\bigl|A_{m,s}^{\textup{SCV}}(f) - \Int f\bigr|^q\right)^{1/q}
      &\leq \underbrace{c_q\,c_{p,q}^{s,d}\cdot n_0^{- (1 - 1/q')}}_{=: C_{p,q}^{s,d}}
            \cdot \, m^{-s - d(1 - 1/p')} \,.
  \end{align*}
  Via Markov's inequality, for an error threshold $\eps > 0$
  we get the following bound on the failure probability:
  \begin{equation*}
    \P\left\{\bigl|A_{m,s}^{\textup{SCV}}(f) - \Int f\bigr| > \eps\right\}
      \leq \left(C_{p,q}^{s,d} \cdot m^{-s - d(1-1/p')} \cdot \eps^{-1}\right)^q \,.
  \end{equation*}
  This is guaranteed to be no bigger than a given $\delta \in (0,1)$ for
  \begin{equation*}
    \eps = C_{p,q}^{s,d} \cdot m^{-s-d(1-1/p')} \cdot \delta^{-1/q} \,.
  \end{equation*}
  With $\frac{1}{q} = \frac{1}{p} - \frac{s}{d}$ and $n \asymp m^d$, we obtain the desired bounds.
\end{proof}

\begin{remark}[Comparison with non-linear methods]
  The above theorem gives error bounds of the shape
  \begin{equation*}
    e_{\prob}^{\linMC}(n,\delta) \preceq n^{-\rho} \cdot \delta^{-r} \,,
  \end{equation*}
  where the main rate is a number $\rho \in (0,1)$,
  and the tail of the error distribution becomes thinner the larger the regularity,
  namely, $r \searrow 0$ for, say, $p \nearrow d/s$ with fixed $d$ and $s$.
  A small value of $r$ close to $0$ is desirable
  as this means that the size of the confidence intervals grows at a slower rate
  when we decrease the acceptable uncertainty~$\delta$.
  
  Recall that applying, for instance, the median trick
  to a given family $(Q_n)_{n \in \N}$ of linear methods 
  can further reduce the dependence on $\delta^{-1}$,
  in our situation we would find
  \begin{equation*}
    e_{\prob}^{\MC}(n,\delta) \preceq \left(\frac{\log \delta^{-1}}{n}\right)^{\rho} \,,
  \end{equation*}
  see~\cite[Thm~2]{KR19}.
  For this we need $k \asymp \log \delta^{-1}$ independent repetitions of the linear method $Q_{n_1}$ of which we take the median,
  the resulting method is non-linear and uses $n = k n_1$ samples.
  The optimal $\delta$-dependence for non-linear methods in the low smoothness regime, though, remains an open problem.
\end{remark}

\begin{remark}
  The SCV algorithm is exact on the space $\mathcal{P}^s(\R^d)$ of polynomials of total degree less than $s$.
  Furthermore, the bounds in Theorems~\ref{thm:linMC-cont} and~\ref{thm:s<d/p} still hold
  if, instead of the classical unit ball in $W_p^s([0,1]^d)$, we consider the larger input set
  \begin{equation*}
    \Finput' := \left\{f \in W_p^s([0,1]^d) \colon |f|_{W_p^s([0,1]^d)} \leq 1 \right\} \,.
  \end{equation*}
  Restricting to functions from the unit ball, however,
  we would have
  \begin{equation*}
    |\Int f| \leq \|f\|_{L_p([0,1]^d)} \leq \|f\|_{W_p^s([0,1]^d)} \leq 1 \,,
  \end{equation*}
  hence, the zero algorithm $A_0(f) = 0$ would have an error of at most $1$,
  the so-called \emph{initial error}.
  An error bound of the shape $n^{-\rho} \cdot \delta^{-r}$ as it is given in Theorem~\ref{thm:s<d/p}
  would exceed the initial error for very small uncertainty levels $\delta \prec n^{-\rho/r}$.
  Hence, instead of performing SCV one could simply return $0$.
  We might therefore give an upper bound
  \begin{equation*}
    e_{\prob}^{\linMC}(n,\delta) \preceq \min\left\{1,\, n^{-\rho} \cdot \delta^{-r}\right\} \,.
  \end{equation*}
  The zero algorithm is linear but it introduces a bias, further, we need to decide,
  depending on $\delta$,
  whether we use SCV or just return $0$.
  The rates given by the Theorem, however, hold for SCV without the need to adjust anything to $\delta$.
  Besides, the trick of returning $0$ in order to reduce the error
  whenever a small uncertainty $\delta$ is given,
  does not work if we consider the larger input set $\Finput'$ with the semi-norm bound.
  Furthermore, SCV is a method with good confidence properties for any $\delta$
  without the need to adjust the algorithm
  (the only decision is to be made on the smoothness $s$ we aim to exploit,
  which is determined by the degree of interpolation).
  As Theorem~\ref{thm:delta-LB} will show,
  fixing a method for given cardinality~$n$,
  the $\delta$-dependence we obtained for SCV is of optimal order
  among all linear methods.
  Finally, at least for low integrability~$1 \leq p \leq 2$,
  we find matching lower bounds for the
  joint $(n,\delta)$-dependence of the
  error of SCV, see Theorem~\ref{thm:SCV-LB}.
\end{remark}

For the sake of completeness we provide a result for boundary smoothness.

\begin{corollary} \label{cor:s=d/p}
  Let $s,d \in \N$ and $1 < p < \infty$ with $s = d/p$.
  Then there exists a family $(Q_n)_{n \in \N}$
  of linear randomized quadrature rules
  such that for $n \geq 2n_0(s,d)$ 
  we achieve probabilistic error rates
  with sub-polynomial dependence on $\delta^{-1}$,
  namely, for all $r > 0$ and $\delta \in (0,1)$ we have
  \begin{align*}
    e_{\prob}^{\linMC}\left(n,\delta,W_p^s([0,1]^d)\right)
      &\leq e\left(Q_n,\delta,W_p^s([0,1]^d)\right) \\
      &\preceq \begin{cases}
                n^{-1} \cdot \delta^{-r}
                  &\text{if } 1 < p \leq 2 \,, \\
                n^{-\left(\frac{1}{p} + \frac{1}{2}\right)} \cdot \delta^{-r}
                  &\text{if } 2 \leq p < \infty \,,
              \end{cases}
  \end{align*}
  where the implicit constant depends on $r$, as well as on the space parameters $p$, $s$, and $d$.
\end{corollary}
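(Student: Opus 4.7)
The strategy is to apply the SCV algorithm \eqref{eq:SCV} with randomized interpolation exactly as in Theorem~\ref{thm:s<d/p}, but to exploit that at the critical smoothness $s = d/p$ the Sobolev embedding $W_p^s([0,1]^d) \embed L_q([0,1]^d)$ holds for \emph{every} finite $q \in [p,\infty)$, see~\cite[Thm~4.12]{AF03}. Consequently, the randomized interpolation bound \eqref{eq:interpolLqErr} is available for every finite $q \geq p$, with some constant $c_{p,q}^{s,d}$ that is finite (it may blow up as $q \to \infty$, but $r > 0$ is fixed in advance, so we will choose $q$ large but fixed).

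First I would fix $r > 0$, choose $q \geq \max\{2, p, 1/r\}$, and take $m := \lfloor (n/(2n_0))^{1/d}\rfloor$ as before. Then I would repeat the chain of estimates in the proof of Theorem~\ref{thm:s<d/p} verbatim: the scaling step \eqref{eq:c_i} now reads
\begin{equation*}
  \left(\expect^{\vec{\xi}} \|f - g_{\vec{i},\vec{\xi}}\|_{L_q(G_{\vec{i}})}^q\right)^{1/q}
    \leq c_{p,q}^{s,d} \, m^{d(1/p - 1/q) - s} \, |f|_{W_p^s(G_{\vec{i}})}
    = c_{p,q}^{s,d} \, m^{-d/q} \, |f|_{W_p^s(G_{\vec{i}})}
\end{equation*}
since $s = d/p$. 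The Marcinkiewicz--Zygmund step \eqref{eq:q-moment|xi}, the triangle-type inequality \eqref{eq:ell_q'-sum}, and the embedding estimate \eqref{eq:ell_q'_vs_seminorm} go through without change (with $q' = 2$ since we chose $q \geq 2$). This yields
\begin{equation*}
  \left(\expect\bigl|A_{m,s}^{\textup{SCV}}(f) - \Int f\bigr|^q\right)^{1/q}
    \leq C_{p,q}^{s,d} \cdot m^{-s - d(1 - 1/p')} \, |f|_{W_p^s([0,1]^d)},
\end{equation*}
and the exponent on $m$ simplifies using $s = d/p$ to $-d$ in the case $p \leq 2$ and to $-d(1/p + 1/2)$ in the case $p \geq 2$.

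Finally, applying Markov's inequality with exponent $q$ gives a failure probability of order
\begin{equation*}
  \left(C_{p,q}^{s,d} \cdot m^{-s - d(1-1/p')} \cdot \eps^{-1}\right)^q,
\end{equation*}
which, resolved for $\eps$, yields the bound with $\delta$-exponent $1/q \leq r$. Using $n \asymp m^d$ and absorbing the $q$-dependent constants into the implicit constant (permissible since $q = q(r)$ is fixed once $r$ is fixed) produces the rates claimed.

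The only real subtlety is the choice of $q$: we must pick it large enough to guarantee $1/q \leq r$, yet small enough that the constants $c_{p,q}^{s,d}$ in \eqref{eq:interpolLqErr} and $c_q$ in the Marcinkiewicz--Zygmund Lemma~\ref{lem:MZ-ineq} remain finite. Both constants are finite for any fixed $q < \infty$, so this is harmless; one simply accepts that the implicit constant in $\preceq$ depends on $r$, which is exactly the content of the statement.
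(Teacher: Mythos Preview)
Your proposal is correct and follows essentially the same approach as the paper: both exploit that at the critical smoothness $s=d/p$ the embedding $W_p^s([0,1]^d)\embed L_q([0,1]^d)$ holds for every finite $q$, rerun the proof of Theorem~\ref{thm:s<d/p} with a large but fixed~$q$, and accept that the implicit constant depends on~$r$ through $q=q(r)$. The paper's version is terser (it sets $r=1/q$ for $q\in[2,\infty)$ and dismisses $r>1/2$ as trivial), but your more explicit choice $q\geq\max\{2,p,1/r\}$ amounts to the same thing.
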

\begin{proof}
  If $s = d/p$, then $W_p^s([0,1]^d) \embed L_q([0,1]^d)$ for all $q < \infty$.
  We follow the lines in the proof of Theorem~\ref{thm:s<d/p} with $r = \frac{1}{q}$ for $q \in [2,\infty)$
  and achieve the claimed bounds for $r \in (0,\frac{1}{2}]$.
  The bounds for larger $r$ are trivial.
\end{proof}

\begin{remark}[Integrability $p=1$]
  The case of integrability $p=1$ and $s=d$ was not included in Corollary~\ref{cor:s=d/p}
  because there deterministic quadrature rules already achieve a rate
  that cannot be improved by randomization:
  \begin{equation*}
    e_{\prob}^{\linMC}\bigl(n,\delta,W_1^s([0,1]^s)\bigr) \asymp e^{\deter}\bigl(n,W_1^s([0,1]^s)\bigr) \asymp n^{-1} \,.
  \end{equation*}
  For this statement we can make use of the embedding $W_1^s([0,1]^s) \embed C([0,1]^s)$,
  which is a special case mentioned in~\cite[Thm~4.12, eq~(1)]{AF03},
  and the proof of Theorem~\ref{thm:linMC-cont} contains all the subsequent arguments for the worst case guarantee.
  
  The case of $p=1$ and $s < d$, however, is contained in Theorem~\ref{thm:s<d/p}
  because for spaces of discontinuous functions
  deterministic quadrature cannot provide any worst case guarantee whatsoever
  while probabilistic guarantees are still possible in the randomized setting.
\end{remark}

\begin{remark}[Randomized interpolation for high smoothness]
  We discussed two versions of stratified control variates:
  One with deterministic interpolation for spaces of continuous functions,
  one with randomly shifted interpolation nodes in the low smoothness regime.
  In applications one might not know the precise class an integrand belongs to,
  so it is desirable to have a method that equally works in all settings.
  In fact, the latter version of SCV with random interpolation
  could also be applied in the high smoothness regime without any loss in the order of convergence.
  Indeed, for spaces $W_p^s([0,1]^d) \embed C([0,1]^d)$,
  the minimal constant in~\eqref{eq:interpolLooErr} depends continuously
  on the shift~$\vec{\xi}$ when taking a point set $\mathcal{X}_{\vec{\xi}}$ instead of $\mathcal{X}$.
  This is because for high smoothness $s > d/p$
  we can even find a H\"older exponent $\beta \in (0,1)$ such that $W_p^s([0,1]^d) \embed C^\beta([0,1]^d)$,
  namely $\beta \leq s - d/p$, see \cite[Thm~4.12, eq~(7)]{AF03}.
  Since the random shift $\vec{\xi}$ is from a compact domain,
  there exists a universal constant such that \eqref{eq:interpolLooErr} holds
  for all point sets $\mathcal{X}_{\vec{\xi}}$ with this constant.
\end{remark}

\begin{remark}[Other function spaces]
  The approach of using polynomial interpolation to reduce the function space norm to a semi norm
  with nice scaling and decomposition properties also works for more general function spaces,
  see for instance \cite[Appendix~A]{Vyb07} concerning Bezov spaces.
  The semi-norm representation of Bezov spaces in~\cite{Vyb07} suggests
  that even if the local polynomial interpolation we use is of degree $s$ or higher,
  we still obtain the same order for $W_p^s([0,1]^d)$, potentially with worse constants though.
  Spaces of dominating mixed smoothness pose yet another challenge
  and it would be interesting to see what can be achieved with a method in the spirit of SCV,
  see~\cite[Sec~4]{KR19} for an introductory discussion of such spaces
  in the context of the probabilistic error criterion.
\end{remark}

\begin{remark}[Other domains]
  The idea of stratified control variates (SCV) is not restricted to rectangular domains.
  If we have a triangulation of a domain~$G$,
  provided a bound on the maximal side-length of each simplex~$G_i \subset G$,
  we may find a scaling property similar to~\eqref{eq:scaling}.
  The final method will also need to take into account small variations
  in the volume of different simplices.
\end{remark}

\section{Lower bounds for low smoothness}
\label{sec:LBs}

So far we only discussed algorithms $Q_n$ with fixed cardinality $n \in \N$.
One might also consider algorithms with varying cardinality $\wt{n}(\omega)$
where $\expect \wt{n}(\omega) \leq n$ is the constraint we impose,
and this additional freedom is indeed present in many randomized integration methods,
see for instance~\cite{KrN17,mU17,KKNU18,KNW23,NuWil23}.
As discussed in \cite[Sec~2.1]{KR19}, however,
restricting to fixed cardinality is no major constraint.
Fixed cardinality facilitates the discussion of lower bounds.

Lower bounds for integration methods in Sobolev spaces are usually found
with the help of so-called bump functions,
for Sobolev spaces $W_p^s$ of smoothness $s \in \N_0$ one may define
a basic bump $\psi_0 : \R^d \to \R$ as
\begin{equation*}
  \psi_0(\vec{x})
    := \begin{cases}
          \left(1 - \|\vec{x}\|_{\ell_2}^2\right)^s
            &\text{for } \|\vec{x}\|_{\ell_2} := \sqrt{x_1^2+\ldots+x_d^2} \leq 1,\\
          0 &\text{else.}
        \end{cases}
\end{equation*}
For this function we have
\begin{equation*}
  1 \geq \psi_0(\vec{x}) \geq \left(\frac{3}{4}\right)^s =: b_0 > 0
  \qquad\text{for $\|\vec{x}\|_{\ell_2} \leq \frac{1}{2}$\,,}
\end{equation*}
and a positive integral (computable in polar coordinates using the Beta function),
\begin{equation*}
  \gamma_0 := \int_{\R^d} \psi_0(\vec{x}) \dint\vec{x}
    = \frac{\Gamma(s+1) \cdot \pi^{d/2}}{\Gamma(\frac{d}{2} + s + 1)}
    > 0 \,.
\end{equation*}
Further, partial derivatives $D^{\vec{\alpha}}\psi_0$ are continuous (in particular at the boundary)
up to the order $|\vec{\alpha}|_1 \leq s-1$, and still bounded and existing in a weak sense for $|\vec{\alpha}|_1 = s$,
hence, $\psi_0 \in W_p^s(\R^d)$ with a finite norm
\begin{equation*}
  c_0 = c_0(p,s,d) := \|\psi_0\|_{W_p^s([0,1]^d)} \in (0,\infty) \,.
\end{equation*}
Taking a scaling parameter $\sigma > 0$ and a shift $\vec{\xi} \in \R^d$
we define
\begin{equation} \label{eq:bumpscaling}
  \psi_{\sigma,\vec{\xi}}(\vec{x})
    := \frac{1}{c_0} \, \sigma^{- (d/p - s)} \,
          \psi_0\left(\frac{\vec{x} - \vec{\xi}}{\sigma}\right) \,,
\end{equation}
and thanks to proper normalisation we have $\|\psi_{\sigma,\vec{\xi}}\|_{W_p^s(\R^d)} \leq 1$,
compare the scaling property for semi norms~\eqref{eq:scaling}.
The fact that in the low smoothness regime $s < d/p$ we have
\begin{equation*} 
  \psi_{\sigma,\vec{\xi}}(\vec{\xi}) = \frac{1}{c_0} \cdot \sigma^{-(d/p-s)}
    \xrightarrow[\sigma \to 0]{} \infty \,,
\end{equation*}
reflects the potential unboundedness of functions from $W_p^s(\R^d)$.
This stands in contrast to the shrinking integral of such a bump,
\begin{equation} \label{eq:INT(bump)}
  \int_{\R^d} \psi_{\sigma,\vec{\xi}}(\vec{x}) \dint\vec{x}
    = \frac{\gamma_0}{c_0} \cdot \sigma^{s + d\left(1 - \frac{1}{p}\right)}
    \xrightarrow[\sigma \to 0]{} 0 \,.
\end{equation}

We start with a negative result on the $\delta$-dependence of the probabilistic error
for any fixed linear method $Q_n$
which inevitably will be polynomial in $\delta^{-1}$ in the case of low smoothness.

\begin{theorem} \label{thm:delta-LB}
  Let $s,d \in \N$ and $1\leq p <\infty$ with $s < d/p$.
  Then, for any linear randomized quadrature rule~$Q_n \neq 0$ that uses $n$ function values,
  there exist constants $c_n > 0$ and $\delta_0 \in (0,1)$ such that
  for all $\delta \in (0,\delta_0]$ we have
  \begin{equation*}
    e\left(Q_n,\delta,W_p^s([0,1]^d)\right)
      \geq c_n \cdot \delta^{-\left(\frac{1}{p} - \frac{s}{d}\right)} \,.
  \end{equation*}
\end{theorem}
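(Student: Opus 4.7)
The plan is to test $Q_n$ against a bump function $\psi_{\sigma, \xi^\ast}$ from the unit ball of $W_p^s([0,1]^d)$ whose scale $\sigma > 0$ and centre $\xi^\ast \in [0,1]^d$ both depend on $\delta$. The guiding heuristic is that for $\sigma \asymp \delta^{1/d}$ the probability of a sample point landing close to $\xi^\ast$ is of order~$\delta$, and once this happens the bump's height $\sigma^{-(d/p-s)} \asymp \delta^{-(1/p-s/d)}$ produces an error of the claimed magnitude; the construction thus relies on the unboundedness of functions in $W_p^s([0,1]^d)$ that is characteristic of the low-smoothness regime $s < d/p$.

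The first step exploits $Q_n \neq 0$: by continuity of probability there exist constants $\alpha_0, p_0 > 0$ such that the event $\{\max_i |w_i^\omega| \geq \alpha_0\}$ has probability at least $p_0$. I would measurably select an index $i^\ast(\omega)$ realising this maximum together with its associated node $x^\ast(\omega) := x_{i^\ast(\omega)}^\omega$. For $\sigma$ below a method-dependent threshold $\sigma_n > 0$, I would further restrict to a subevent $E_\sigma$ of probability at least $p_0/2$ on which additionally $x^\ast(\omega) \in [\sigma, 1-\sigma]^d$ and no other sample point $x_j^\omega$ (with $j \neq i^\ast(\omega)$) lies within Euclidean distance $3\sigma/2$ of $x^\ast(\omega)$; since the sample points are generically pairwise distinct, such a refinement exists by dominated convergence. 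Introducing an auxiliary $\xi \sim \Uniform([0,1]^d)$ independent of $\omega$, the joint event $G_\sigma := E_\sigma \cap \{\|\xi - x^\ast(\omega)\|_{\ell_2} \leq \sigma/2\}$ has probability at least $c_1 \sigma^d$ for a suitable $c_1 > 0$ by Fubini. On $G_\sigma$ the isolation guarantees that no node other than $x^\ast(\omega)$ enters the bump's support, so $Q_n^\omega(\psi_{\sigma, \xi}) = w_{i^\ast(\omega)}^\omega\, \psi_{\sigma, \xi}(x^\ast(\omega))$, and $\psi_0 \geq b_0$ on the half-ball yields $|Q_n^\omega(\psi_{\sigma, \xi})| \geq (\alpha_0 b_0/c_0)\, \sigma^{-(d/p-s)}$. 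Since $|\Int \psi_{\sigma, \xi}|$ is of the strictly smaller order $\sigma^{s + d(1-1/p)}$ by \eqref{eq:INT(bump)}, the error at $\psi_{\sigma, \xi}$ is bounded below by $C_1 \sigma^{-(d/p-s)}$ on $G_\sigma$ for all $\sigma$ sufficiently small.

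A final Fubini swap then produces a deterministic $\xi^\ast \in [0,1]^d$ at which the input $\psi_{\sigma, \xi^\ast}$ causes an error of magnitude at least $C_1 \sigma^{-(d/p-s)}$ with probability at least $c_1 \sigma^d$ over~$\omega$. Setting $c_1 \sigma^d = \delta$, admissible provided $\delta \leq \delta_0 := c_1 \sigma_n^d$, converts this into the claimed bound with $c_n := C_1 c_1^{1/p - s/d}$. The step I expect to be the main obstacle is the isolation clause in the construction of $E_\sigma$: for methods whose sample points may coincide with positive probability, the event of an empty ball of radius $3\sigma/2$ around the distinguished node need not remain of comparable measure as $\sigma \to 0$. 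The workaround is to merge coinciding sample points into effective nodes with summed weights, apply the argument to the resulting effective cardinality $n' \leq n$, and absorb the loss in the constant $c_n$.
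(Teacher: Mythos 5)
Your proof follows essentially the same route as the paper's: single out a node that carries a weight of size at least some $\alpha_0>0$ with positive probability and is separated from all other nodes, centre a bump of scale $\sigma$ on it so that the hitting probability is of order $\sigma^d$ while the output, once hit, is of order $\sigma^{-(d/p-s)}$, then choose $\sigma^d\asymp\delta$. The only structural difference is how the deterministic centre $\vec{\xi}^\ast$ is produced: you average over an auxiliary uniform shift and apply Fubini, whereas the paper discretizes with a $\rho$-cover of $\Torus^d$ and pigeonholes; these are interchangeable. You also identified and resolved the coinciding-nodes issue exactly as the paper does (merge nodes, sum weights).

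The one step that does not hold as written is the assertion that $E_\sigma$ retains probability $p_0/2$. The clause $\vec{x}^\ast(\omega)\in[\sigma,1-\sigma]^d$ can annihilate the event entirely: for the admissible rule $Q_1(f)=f(\vec{0})$ the unique heavy node lies on the boundary for every $\omega$, so $E_\sigma=\emptyset$ for all $\sigma>0$, yet the theorem applies to this rule. Monotone convergence only gives $\P(E_\sigma)\to\P(E_0)$, and your limit event $E_0$ excludes boundary nodes, so no lower bound on $\P(E_\sigma)$ in terms of $p_0$ follows. The paper sidesteps this by working on the torus with the wrap-around distance and periodized bumps, so that boundary and interior points are indistinguishable. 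Within your setup the cheapest repair is to drop the interior restriction and use the restriction of $\psi_{\sigma,\vec{\xi}}$ to $[0,1]^d$: the $W_p^s([0,1]^d)$-norm only decreases under restriction, the value at $\vec{x}^\ast$ is unchanged, the integral over the cube is still $O\bigl(\sigma^{s+d(1-1/p)}\bigr)$, and the admissible centres $\{\vec{\xi}\in[0,1]^d:\|\vec{\xi}-\vec{x}^\ast\|_{\ell_2}\le\sigma/2\}$ still have volume at least $2^{-d}V_d(\sigma/2)^d$, costing only a constant. A last cosmetic point: to conclude $e(Q_n,\delta,f)\ge\eps$ from definition~\eqref{eq:e(Q_n,del,f)} you want the failure probability strictly greater than $\delta$, so arrange $c_1\sigma^d>\delta$ rather than equality.
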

\begin{proof}
  Let the method~$Q_n$ be given as in~\eqref{eq:Q=linMC}.
  For $t > 0$ define the random index set
  \begin{equation*}
    I^{\omega}_t := \left\{i \in \{1,\ldots,n\} \colon |w_i^{\omega}| \geq t \right\} \,.
  \end{equation*}
  Since $Q_n \neq 0$ there exists a threshold $t_0 > 0$ such that $\expect [\#I_{t_0}] > 0$.
  Fix this parameter $t_0$ and for $r > 0$ define the random set
  \begin{equation*}
    J^{\omega}_{r} := \Bigl\{i \in I^{\omega}_{t_0} \colon 
                            \sup_{j \in \{1,\ldots,m\} \setminus \{i\}} \wt{\dist}(\vec{x}_j^{\omega},\vec{x}_i^{\omega}) \geq r
                      \Bigr\} \,,
  \end{equation*}
  where $\wt{\dist}(\vec{x},\vec{y}) := \min_{\vec{k} \in \Z^d} \|\vec{x} - \vec{y} + \vec{k}\|_{\ell_2}$
  is the so-called ``wrap-around distance'' on the $d$-dimensional torus $\Torus^d$,
  that is the periodization of $[0,1]^d$ with opposing faces glued together, identifying the coordinates $0$ and $1$.
  We may assume that for $i \neq j$ the two integration nodes $\vec{x}_i^{\omega}$ and $\vec{x}_j^{\omega}$
  are almost surely distinct.
  (If not, $\vec{x}_j^{\omega}$ could be moved elsewhere and assigned
  a new weight $\wt{w}_j^\omega = 0$
  while the new weight for $\vec{x}_i^{\omega}$ is $\wt{w}_i^{\omega} := w_i^{\omega} + w_j^{\omega}$.)
  Hence, there exists an $r_0 > 0$ such that $\expect [\#J_{r_0}] > 0$.
  
  For $0 < \rho  \leq \min\{\frac{r_0}{3},\frac{1}{4}\}$
  let $\Xi_{\rho} = (\vec{\xi}_j)_{j=1}^{M(\rho)}$ be a $\rho$-cover of $\Torus^d$,
  that is, the union of the $\rho$-balls
  \begin{equation*}
    B_{\rho}(\vec{\xi}_j) := \left\{\vec{x} \in \Torus^d \colon \wt{\dist}(\vec{x},\vec{\xi}_j) < \rho\right\}
  \end{equation*}
  covers the entirety of $\Torus^d = [0,1]^d$.
  It is well known that there exists a $\rho$-cover such that the $\rho/2$-balls $B_{\rho/2}(\vec{\xi}_j)$
  do not overlap.
  Volume estimates show that such a $\rho$-covering has the size
  \begin{equation*}
    M(\rho) \leq V_d \cdot \left(\frac{2}{\rho}\right)^d\,,
  \end{equation*}
  where $V_d$ is the volume of the $d$-dimensional Euclidean unit ball.
  (On the $d$-torus $\Torus^d$ the volume of balls up to radius $\frac{1}{2}$ is the same as in $\R^d$.)
  By definition of the (random) index set $J_{r_0}$, and since $r_0 > 2\rho$,
  for each $\vec{\xi}_j$ we know that the random variable
  \begin{equation*}
    Y_j^\omega(\rho) := \#\left(B_{\rho}(\vec{\xi}_j) \cap \{\vec{x}_i^\omega \mid i \in J_{r_0}^{\omega}\}\right) 
  \end{equation*}
  takes only values $0$ or $1$.
  Adding things up, we find
  \begin{equation*}
    \expect [\#J_{r_0}] \leq \sum_{j=1}^{M(\rho)} \expect Y_j(\rho)\,,
  \end{equation*}
  hence there exists $j_0 \in \{1,\ldots,M(\rho)\}$ such that
  \begin{equation*}
    q(\rho) := \P\bigl\{Y_{j_0}(\rho) = 1\bigr\}
      = \expect Y_{j_0}(\rho) 
      \geq \frac{\expect [\#J_{r_0}]}{M(\rho)}
      > q_0 \cdot \rho^d
  \end{equation*}
  with a suitable constant $q_0 > 0$.
  If $Y_{j_0}^\omega(\rho) = 1$, let $i_0^\omega$ denote the unique index
  such that $\vec{x}_{i_0} = \vec{x}_{i_0^\omega}^{\omega} \in B_\rho(\vec{\xi}_{j_0})$,
  in which case for all $j \neq i_0$ we have $\vec{x}_j^\omega \notin B_{2\rho}(\vec{\xi}_{j_0})$
  since
  \begin{equation*}
    \wt{\dist}(\vec{x}_j,\vec{\xi}_{j_0})
      \geq \wt{\dist}(\vec{x}_j,\vec{x}_{i_0}) - \wt{\dist}(\vec{x}_{i_0},\vec{\xi}_{j_0}) 
      \geq r_0 - \rho
      \geq 2\rho \,.
  \end{equation*}
  Consider the periodized bump centred around $\vec{\xi}_{j_0}$ with scaling $\sigma = 2\rho$,
  \begin{equation*}
    \wt{\psi}_{\sigma,\vec{\xi}_{j_0}} := \sum_{\vec{k} \in \Z^d} \wt{\psi}_{\sigma,\vec{\xi}_{j_0} + \vec{k}} \,,
  \end{equation*}
  where for its restriction to $[0,1]^d$, of course, only the summation over $\vec{k} \in \{-1,0,1\}^d$ is relevant.
  With $\sigma \leq \frac{1}{2}$, no overlap of shifted copies occurs,
  so we preserve the norm estimate
  $\bigl\|\wt{\psi}_{\sigma,\vec{\xi}_{j_0}}\bigr\|_{W_p^s([0,1]^d)} = \bigl\|\psi_{\sigma,\vec{\xi}_{j_0}}\bigr\|_{W_p^s(\R^d)} \leq 1$,
  as well as the integral value~\eqref{eq:INT(bump)} which decays with $\sigma \to 0$.
  In contrast, for the algorithm $Q_n$ we can state
  that with probability $q(\rho) = q(\sigma/2)$
  the algorithm hits one of the large values of the bump multiplying it with a large weight,
  and with other sample points it detects only zero values outside the support of the bump function,
  in detail:
  \begin{equation*}
    Y_{j_0}(\rho) = Y_{j_0}(\sigma/2) = 1
    \quad\Longrightarrow\quad
      \bigl|Q_n(\wt{\psi}_{\sigma,\vec{\xi}_{j_0}})\bigr|
        \geq t_0 \cdot \frac{b_0}{c_0} \cdot \sigma^{-(d/p - s)} \xrightarrow[\sigma \to 0]{} \infty \,.
  \end{equation*}
  Hence, there exists $0 < \rho_0 \leq \min\{\frac{r_0}{3}, \frac{1}{4}\}$ such that
  $\bigl|Q_n(\wt{\psi}_{\sigma,\vec{\xi}_{j_0}})\bigr| \geq 2 \Int \wt{\psi}_{\sigma,\vec{\xi}_{j_0}}$
  for $\rho = \sigma/2 \in (0, \rho_0]$.
  Define $\delta_0 := q_0 \cdot \rho_0^d$,
  then for $\delta \in (0,\delta_0]$,
  putting $\rho = \sqrt[d]{\delta/q_0}$,
  we find that
  \begin{equation*}
    \left|Q_n(\wt{\psi}_{\sigma,\vec{\xi}_{j_0}}) - \Int \wt{\psi}_{\sigma,\vec{\xi}_{j_0}}\right|
      \geq \frac{t_0 b_0}{2 c_0} \cdot
              \left(\frac{q_0}{2^{d} \delta}\right)^{\frac{1}{p} - \frac{s}{d}} \,.
  \end{equation*}
  with probability greater than $\delta$.
\end{proof}

\begin{remark}
  The theorem above shows that the $\delta$-dependence of the upper bound in Theorem~\ref{thm:s<d/p}
  cannot be improved without choosing different methods for different $\delta$.
  The lower bounds in~\cite[Thm~1]{KR19} imply that
  the $n$ dependence of the error bound in Theorem~\ref{thm:s<d/p} is also optimal,
  that is,
  \begin{equation*}
    e^{\linMC}_{\prob}\left(n,\delta,W_p^s([0,1]^d)\right)
      \asymp n^{-\left(\frac{s}{d} + 1 - \frac{1}{p'}\right)} \cdot c_\delta
  \end{equation*}
  with $p' := \min\{2,p\}$ and $c_\delta > 0$ for $\delta \in (0,\frac{1}{4})$.
  Of course, it would be desirable to have lower bounds that reflect the joint dependence
  of the probabilistic error on $n$ and $\delta$, at least for not too small values of $\delta$.
  The following theorem provides such a lower bound
  but only specifically for the SCV quadrature rule.
\end{remark}

\begin{theorem} \label{thm:SCV-LB}
  Let $s,d \in \N$ and $1 \leq p < \infty$ with $s < d/p$.
  Let $n_0 = n_0(s,d)$ as in~\eqref{eq:n0} and consider
  the respective SCV algorithm $A_{m,s}^{\textup{SCV}}$ for $m \in \N$,
  see~\eqref{eq:SCV}.
  Then there exists a constant $c = c(p,s,d,n_0) > 0$ and $\delta_0 \in (0,1)$
  such that for $\delta \in (0,\delta_0]$ and with $n = 2n_0 \, m^d$ we have
  \begin{equation*}
    e\left(A_{m,s}^{\textup{SCV}}, \delta, W_p^s([0,1]^d)\right)
      \succeq n^{-\left(\frac{s}{d} + 1 - \frac{1}{p}\right)} \cdot \delta^{-\left(\frac{1}{p} - \frac{s}{d}\right)} \,.
  \end{equation*}
\end{theorem}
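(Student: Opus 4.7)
My plan is to drive the lower bound with a single localized bump. Fix a subcube $G_{\vec{i}_0}$ and a point $\vec{\xi}^* \in G_{\vec{i}_0}$ at distance at least $1/(2m)$ from $\partial G_{\vec{i}_0}$, and take $f := \psi_{\sigma,\vec{\xi}^*}$ from~\eqref{eq:bumpscaling} with scale $\sigma < 1/(2m)$ to be tuned. Then the support $B_\sigma(\vec{\xi}^*)$ sits entirely inside $G_{\vec{i}_0}$, so $f$ vanishes on every other subcube $G_{\vec{i}}$; since the interpolated values are then all zero on those subcubes, $g_{\vec{i},\vec{\xi}} \equiv 0$ and $a_{\vec{i},\vec{\xi}} = 0$ for $\vec{i} \neq \vec{i}_0$, and the SCV output~\eqref{eq:SCV} collapses to $A = \frac{1}{m^d}\bigl[a_{\vec{i}_0,\vec{\xi}} + \frac{1}{n_0}\sum_{j=1}^{n_0} (f - g_{\vec{i}_0,\vec{\xi}})(\vec{X}_{\vec{i}_0}^{(j)})\bigr]$. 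Because $\vec{\xi}$ is independent of the stratified samples, I will treat these two sources of randomness separately.

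Next, introduce two events. Let $E_\xi$ be the event that no scaled interpolation point of $\Phi_{m,\vec{i}_0}(\mathcal{X}_{\vec{\xi}})$ falls in $B_\sigma(\vec{\xi}^*)$; on $E_\xi$ all interpolation values of $f$ vanish, so $g_{\vec{i}_0,\vec{\xi}} \equiv 0$, $a_{\vec{i}_0,\vec{\xi}} = 0$, and $A = \frac{1}{n_0 m^d}\sum_j f(\vec{X}_{\vec{i}_0}^{(j)})$. Each interpolation point, as $\vec{\xi}$ ranges uniformly over $[0,1]^d$, is uniformly distributed on a box of side $1/(2m)$ inside $G_{\vec{i}_0}$, so a union bound gives $\P(E_\xi^c) \leq n_0 V_d (2m\sigma)^d$. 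Let $E_X$ be the event that at least one sample $\vec{X}_{\vec{i}_0}^{(j)}$ lies in the inner ball $B_{\sigma/2}(\vec{\xi}^*)$, where $f \geq (b_0/c_0)\sigma^{-(d/p-s)}$; a standard binomial estimate gives $\P(E_X) \geq \tfrac{1}{2} n_0 V_d (m\sigma/2)^d$ whenever that quantity is at most~$1$. I tune $\sigma$ so that $n_0 V_d (m\sigma/2)^d = 4\delta$, obtaining $\P(E_X) \geq 2\delta$ and $\P(E_\xi^c) \leq 4^{d+1}\delta$; then by independence $\P(E_\xi \cap E_X) \geq 2\delta\,(1 - 4^{d+1}\delta) \geq \delta$ once $\delta$ lies below a threshold depending only on $d$ and $n_0$.

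On the good event $E_\xi \cap E_X$, non-negativity of $f$ lets me discard the non-hitting sample contributions to get $A \geq (b_0/(c_0 n_0 m^d))\sigma^{-(d/p-s)}$, while $I = \Int f \asymp \sigma^{s+d(1-1/p)}$ by~\eqref{eq:INT(bump)} satisfies $A/I \asymp 1/(n_0(m\sigma)^d) \asymp 1/\delta$; hence $A$ dominates $I$ and $|A - I| \geq (b_0/(2c_0 n_0 m^d))\sigma^{-(d/p-s)}$ once $\delta$ is small. Substituting the chosen scale $\sigma \asymp (\delta/n_0)^{1/d}/m$ together with $n \asymp m^d$ converts the right-hand side into $c \cdot n^{-(s/d + 1 - 1/p)} \delta^{-(1/p - s/d)}$ with a constant depending on $p,s,d,n_0$. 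The main obstacle, in my view, is that the interpolation-hit probability exceeds the sample-hit probability by a factor of $4^d$, so a crude union bound cannot make $E_\xi^c$ negligible \emph{on the same scale} as $E_X$; what rescues the argument is precisely the independence of $\vec{\xi}$ from the samples, which relegates the interpolation-hit event to a constant multiplicative loss that is absorbed in the asymptotic regime $\delta \to 0$.
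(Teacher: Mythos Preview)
Your argument is correct and follows the same skeleton as the paper's: a single bump supported in one stratum, an event on which all interpolation nodes miss the bump so the control variate vanishes, an independent event on which some stratified sample hits the bump's core, and finally the observation that the integral is negligible compared to the output on that joint event.

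The one difference worth noting is how the interpolation-miss event $E_\xi$ is controlled. You place the bump at the centre of a subcube and bound $\P(E_\xi^c)$ by a union bound of order $(m\sigma)^d \asymp \delta$; this competes with $\P(E_X)$ on the same scale (up to a factor $4^d$), and you correctly observe that independence of $\vec{\xi}$ from the samples is what saves the day. The paper instead places the bump near the \emph{corner} of $G_{\vec 0}$, at $\vec{x}_0 = (\tfrac{1}{8m},\ldots,\tfrac{1}{8m})$ with $\sigma \le \tfrac{1}{8m}$, so that its support lies in $[0,\tfrac{1}{4m}]^d$; since every interpolation node in $G_{\vec 0}$ has the form $\tfrac{1}{2m}(\vec{x}_j+\vec{\xi})$ with $\vec{x}_j \in [0,1]^d$, the event $\vec{\xi}\notin[0,\tfrac12]^d$ alone forces every node out of the support, and this event has the \emph{constant} probability $1-2^{-d}$ independent of $\sigma$. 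The paper's placement thus avoids the balancing act you flag as the main obstacle, at the price of exploiting the particular structure~\eqref{eq:X_xi} of the shifted node set; your version is slightly more robust in that it would work for any randomised interpolation scheme whose nodes have bounded density.
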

\begin{proof}
  We reuse some of the notation from the proof of Theorem~\ref{thm:delta-LB}
  and skip details that are parallel.
  Let $\vec{0} := (0,\ldots,0) \in \N_0^d$ denote the zero multi index.
  Consider the bump function $\psi_{\sigma,\vec{x}_0}$
  at $\vec{x}_0 = \left(\frac{1}{8m}, \ldots, \frac{1}{8m}\right)$
  with scaling $0 < \sigma \leq \frac{1}{8m}$.
  With probability $p_1 := 1-2^{-d} \geq \frac{1}{2}$
  the shift parameter satisfies $\vec{\xi} \notin [0,\frac{1}{2}]^d$,
  hence the control variate is zero, in particular $g_{\vec{0},\vec{\xi}} = 0$ on the lower corner sub-cube $G_{\vec{0}}$.
  An individual sample $\vec{X}_{\vec{0}}^{(j)} \sim \Uniform(G_{\vec{0}})$
  hits the ball $B_{\sigma/2}(\vec{x}_0)$ with probability
  \begin{equation*}
    p_2 := \frac{1}{V_d} \cdot \left(\frac{m\sigma}{2}\right)^d \asymp n \cdot \sigma^d \,,
  \end{equation*}
  the probability of any of the sample points $\vec{X}_{\vec{0}}^{(1)},\ldots,\vec{X}_{\vec{0}}^{(n_0)}$
  to hit this ball is
  \begin{equation*}
    p_3 := 1 - (1-p_2)^{n_0}
      \geq \frac{n_0 p_2}{2}
    \qquad\text{for sufficiently small } p_2.
  \end{equation*}
  If any sample point hits the ball, the output of the algorithm is at least
  \begin{equation*}
    A_{m,s}^{\textup{SCV}}(\psi_{\sigma,\vec{x}_0})
      \geq \frac{1}{n_0 \, m^d} \cdot \frac{b_0}{c_0} \cdot \sigma^{-(d/p - s)}
      \asymp n^{-1} \cdot \sigma^{-(d/p - s)}  \,.
  \end{equation*}
  Choosing $\sigma = \frac{\sigma_0}{m} \cdot \delta^{1/d} \asymp (\delta/n)^{1/d}$
  with a suitable constant $0 < \sigma_0 \leq \frac{1}{8}$,
  the ball $B_{\sigma/2}(\vec{x}_0)$ is hit with probability $p_1 p_3 > \delta$
  for small $\delta$ (and, hence, small $p_2$).
  Provided that $\delta$ and thus $\sigma$ is small enough,
  the integral of $\psi_{\sigma,\vec{x}_0}$ is neglectable compared to the output of the algorithm,
  namely
  \begin{equation*}
    e\left(A_{m,s}^{\textup{SCV}}, \delta, W_p^s([0,1]^d)\right)
      \geq \frac{1}{2} A_{m,s}^{\textup{SCV}}(\psi_{\sigma,\vec{x}_0})
      \asymp n^{-\left(\frac{s}{d} + 1 - \frac{1}{p}\right)} \cdot \delta^{-\left(\frac{1}{p} - \frac{s}{d}\right)} \,.
  \end{equation*}
  This is the lower bound as claimed.
\end{proof}

\begin{remark}
  Comparing the upper bounds of Theorem~\ref{thm:s<d/p} with the lower bounds of the theorem above,
  for higher integrability $p > 2$ we see that
  a gap of order $n^{\frac{1}{2} - \frac{1}{p}}$ in the $n$-dependence occurs.
  Considering, say, functions that consist of several bumps on different sub-domains
  does not seem to lead to a larger lower bound.
  This comes as a surprise
  given that general lower bounds for arbitrary (potentially non-linear) algorithms
  are found with functions that fill the domain with bumps
  in the case of high integrability $p \geq 2$, see~\cite[Lem~1]{KR19}.
  The main difference in the proofs is that for general lower bounds
  the source of error is a lack of knowledge on the input function
  while lower bounds for linear methods are caused by the corrupting effect of outliers.
  It remains an open problem whether the upper bound analysis of the SCV algorithm can be improved,
  or whether some trick in the lower bound has the potential of closing the gap.
  
  Yet another much more challenging problem, of course, is
  to prove lower bounds on the joint $(n,\delta)$-dependence for general linear algorithms
  in the low smoothness regime,
  especially if the algorithm can be tuned in to $\delta$.
  One could also try to prove lower bounds for classes of algorithms
  with certain desirable properties.
  For instance, using randomized quasi Monte Carlo methods with equal weights,
  the constant $t_0$ in the proof of Theorem~\ref{thm:delta-LB} can be made explicit.
  If we avoid negative weights, we can lift the restriction
  that only one function evaluation occurs on the support of the bump.
  Algorithms with a high probability that the sampling set has low discrepancy
  (or some other suitable property of ``well distribution'') might allow for more precise estimates
  on the probability of hitting large function values of a bump function.
\end{remark}

\section{Numerical experiments}
\label{sec:numerics}

\begin{figure}[h]
  \includegraphics[width = \linewidth]{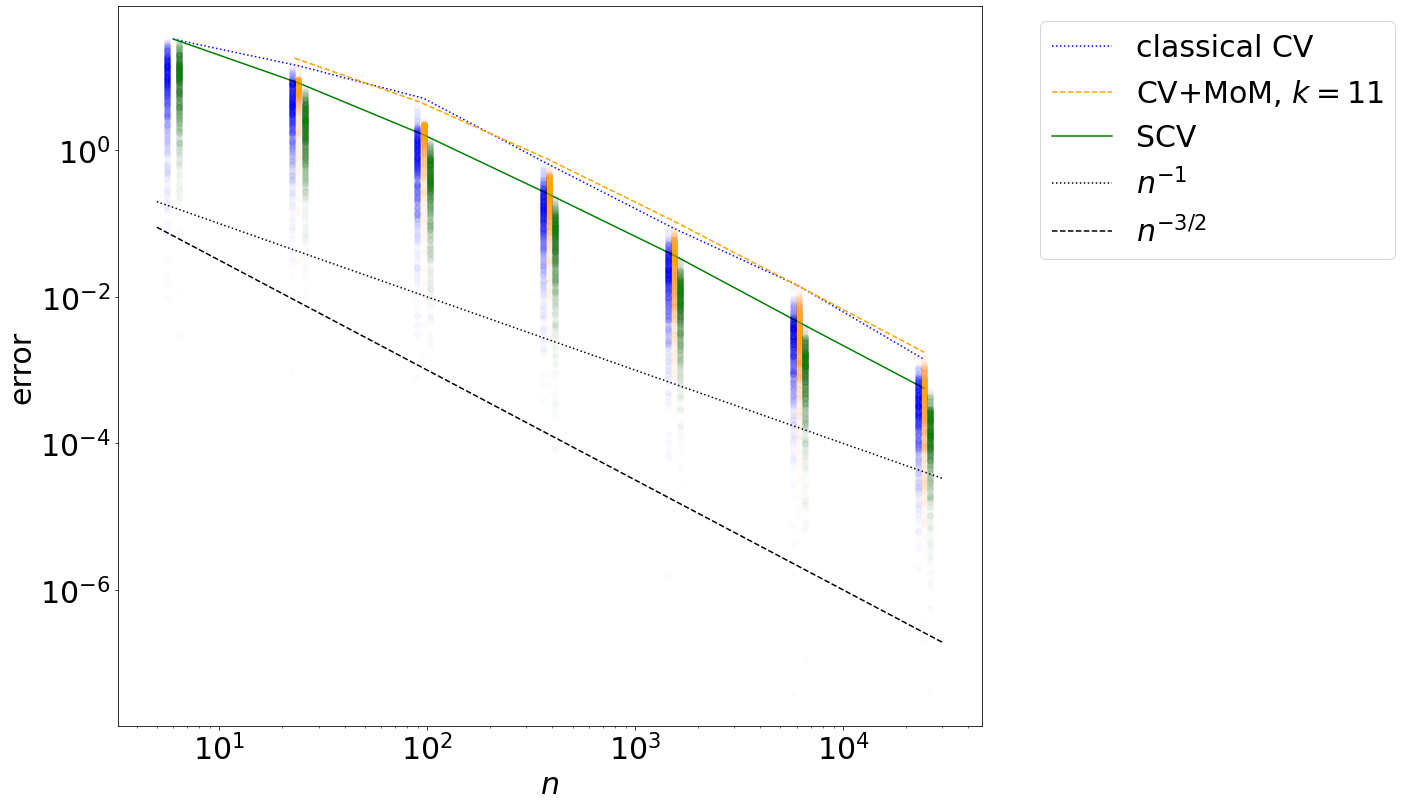}
  \caption{All algorithms were implemented for $s=2$
    and applied to the test function \eqref{eq:test-f}, repeated $1000$ times for each $m=1,2,4,8,16,32,64$.
    Every pale dot represents the observed absolute error for one realization,
    overlapping dots increasing the saturation.
    For the sake of telling the algorithms apart,
    even when they use the same amount $n$ of function values
    for the same subdivision parameter $m$,
    we introduced a slight shift to the left
    for the dots of
    {\color{blue} classical CV}, and a slight shift to the right for {\color{darkgreen} SCV}.
    The top lines mark the largest error we observe among the $1000$ realizations.
    For $s=d=2$, the deterministic worst case error rate is~$n^{-1}$,
    the best possible randomized rate of convergence is $n^{-3/2}$.}
  \label{fig:rate}
\end{figure}

\begin{figure}[h]
  \includegraphics[width = \linewidth]{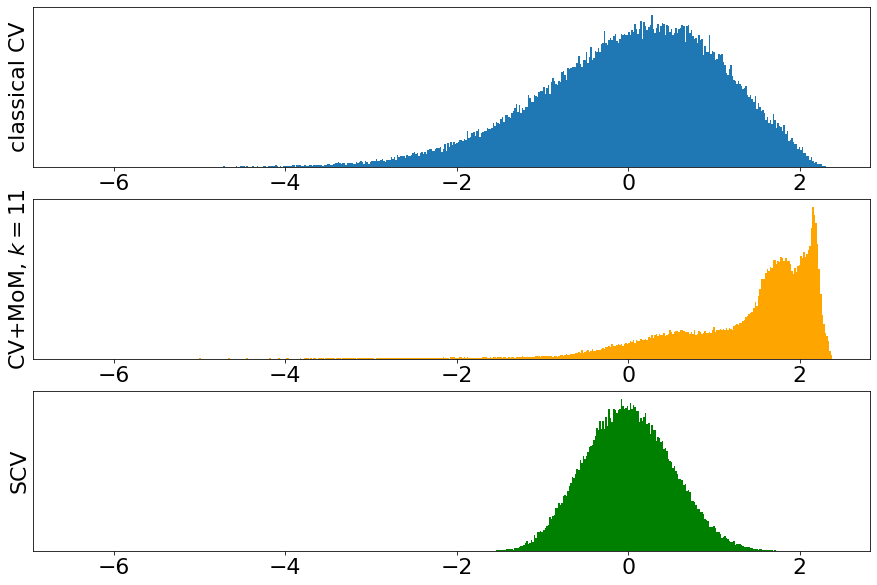}
  \caption{Histograms of the signed error of $10^5$ realizations of each algorithm
    for the test function~\eqref{eq:test-f}
    with algorithmic parameters $s=2$ and $m=4$.
    The range of the error axis is determined by the most extreme single deviations observed.}
  \label{fig:histogram}
\end{figure}

We implemented three versions of control variates
based on the same piecewise polynomial deterministic interpolation on $m^d$ sub-cubes
using \mbox{$n_0 = n_0(s,d)$} interpolation nodes on each sub-cube,
arranged in a simplex regular grid.
Consequently,
all methods work with the same approximation \mbox{$g \colon [0,1]^d \to \R$},
namely, \emph{classical control variates} (CV),
\begin{equation} \label{eq:classicalCV}
  A_{m,s}^{\textup{CV}}(f)
    := \Int g + \frac{1}{n_0 \, m^d} \sum_{i=1}^{n_0 \, m^d} [f-g](\vec{X}_i)\,,
  \qquad \vec{X}_i \iid \Uniform([0,1]^d)\,,
\end{equation}
the \emph{control-variate+median-of-means} approach (CV+MoM),
\begin{equation} \label{eq:CV+MoM}
   A_{m,s,k}^{\textup{CV+MoM}}(f)
     := \Int g + \median\biggl\{ \frac{1}{n_1} \sum_{i=1}^{n_1} [f-g]\bigl(\vec{X}_i^{(j)}\bigr)\biggr\}_{j=1}^k,
     \quad \vec{X}_i^{(j)} \iid \Uniform([0,1]^d)\,,
\end{equation}
where $n_1 := \lfloor n_0 m^d / k \rfloor \geq 1$ for sufficiently large $m$,
and finally the new method \eqref{eq:SCV} we call \emph{stratified control variates} (SCV).
While our implementation is fit for any combination of $s$ and $d$,
the empirical rates of convergence bear no surprises for any of these methods.
Therefore, we focus on studying the error distribution particularly
for $s=2$ (piecewise linear interpolation) and $d=2$ with $n_0 = 3$.
We picked the test function
\begin{equation} \label{eq:test-f}
  f(x_1,x_2) := c \cdot \exp(15x_1 - 5x_2) \,,
\end{equation}
where the constant $c > 0$ is chosen such that $\Int f = 1$.
This function is analytic
but its values and derivatives near the corner $(x_1,x_2) = (1,0)$ are very large,
causing an outlier effect.

As can be observed in Figure~\ref{fig:rate}, for $m=1$ classical CV and SCV
are actually the same method.
In view of the worst out of $1000$ realizations,
it seems like the randomized rate of convergence kicks in earlier for SCV,
whereas for classical CV the decay is almost as slow as for deterministic methods.
CV+MoM could not be implemented for $m=1$ with $k=11$ because we need $n_0 \, m^d \geq k$.
The value $k=11$ is optimized for uncertainty $\delta \approx \frac{1}{100}$, so looking at $1000$ realizations
we should already notice some effect of the median trick,
namely that the error concentrates around a certain value that decays with the desired rate.
A clear difference in the methods can be seen in the histograms of Figure~\ref{fig:histogram}
depicting the signed error $Q_n(f) - \Int f$.
While classical CV is unbiased, we see that the distribution of the error is slightly skewed
since the test function itself is severely skewed.
For CV+MoM we witness a strong bias towards overestimating the integral,
but compared to classical CV we have a better behaviour around the desired confidence level:
Indeed, an absolute error larger than $2.5$ occured for roughly $2\%$ of the classical CV realizations
but only for approximately $1\%$ of the CV+MoM runs,
further, more than $1\%$ of the errors for classical CV exceeded $2.9$.
In contrast, our new SCV quadrature is both unbiased
and highly concentrated around the true solution.

\appendix

\section{Stochastic inequalities}

We start with a Hoeffding type inequality for bounded random variables
where we know a bound on the $\ell_p$-norm of the vector $\vec{b} = (b_i)$
of individual absolute bounds.

\begin{lemma} \label{lem:pnorm-Hoeffding}
  Let $1 < p < 2$ and consider a family $Z_1,\ldots,Z_n$ of
  independent random variables with absolute bounds
  $|Z_i| \leq b_i$, writing $\vec{b} = (b_i)_{i=1}^n \in \R^n$.
  Further, denote the means $a_i := \expect Z_i$
  and their average $a := \frac{1}{n} \sum_{i=1}^n a_i$.
  Then for $\eps > 0$ we have
  \begin{equation*}
    \P\left\{\left|\frac{1}{n} \sum_{i=1}^n Z_i - a\right| > \eps\right\}
      \leq 2\exp\left(-\frac{1}{2} \left(\frac{n \, \eps}{3 \|\vec{b}\|_{\ell_p}}
                \right)^{\frac{p}{p-1}}\right)
      \,.
  \end{equation*}
  Equivalently, for $\delta \in (0,1)$,
  \begin{equation*}
    \left|\frac{1}{n} \sum_{i=1}^n Z_i - a\right|
      \leq 3 \, n^{-1} \cdot \left(2\log\frac{2}{\delta}\right)^{1-1/p} \cdot \|\vec{b}\|_{\ell_p} \,,
  \end{equation*}
  holds with probability at least $1-\delta$.
\end{lemma}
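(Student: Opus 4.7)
The plan is to follow the Cram\'er--Chernoff recipe, combining two complementary bounds on each individual centred moment generating function so that the aggregate exponent scales with $\|\vec{b}\|_{\ell_p}$ rather than $\|\vec{b}\|_{\ell_2}$. Writing $S_n := \sum_{i=1}^n (Z_i - a_i)$ and $\psi_i(\lambda) := \log \expect \exp(\lambda (Z_i - a_i))$, I note that $|Z_i| \leq b_i$ forces $|Z_i - a_i| \leq 2 b_i$, so two estimates are at hand: the classical Hoeffding bound $\psi_i(\lambda) \leq \lambda^2 b_i^2 / 2$, sharp when $\lambda b_i$ is small, and the deterministic bound $\psi_i(\lambda) \leq 2 |\lambda| b_i$, which is better when $\lambda b_i$ is large.

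For fixed $\lambda > 0$, I would introduce a threshold $T > 0$ and split the index set into $I_1 := \{i : b_i \leq T\}$ and $I_2 := \{i : b_i > T\}$, applying the Hoeffding bound on $I_1$ and the trivial bound on $I_2$. The elementary estimates $b_i^{2-p} \leq T^{2-p}$ on $I_1$ and $b_i^{1-p} \leq T^{1-p}$ on $I_2$ (both valid because $1 < p < 2$) convert the $\ell_2$ and $\ell_1$ sums respectively into $\ell_p$ sums, giving
\begin{equation*}
  \sum_{i=1}^n \psi_i(\lambda)
    \leq \left( \frac{\lambda^2}{2} T^{2-p} + 2\lambda T^{1-p} \right) \|\vec{b}\|_{\ell_p}^p \,.
\end{equation*}
Choosing $T$ proportional to $1/\lambda$ (for example $T = 4/\lambda$, or more sharply $T = 4(p-1)/((2-p)\lambda)$, which at $p=2$ recovers the optimal Hoeffding constant) collapses the bracket to a constant multiple of $\lambda^p \|\vec{b}\|_{\ell_p}^p$.

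The standard Chernoff step then yields $\P\{S_n > t\} \leq \exp(-\lambda t + C_p \lambda^p \|\vec{b}\|_{\ell_p}^p)$, and optimizing $\lambda$ by setting $\lambda^{p-1}$ proportional to $t/\|\vec{b}\|_{\ell_p}^p$ produces a Weibull-type tail $\exp(-K_p (t/\|\vec{b}\|_{\ell_p})^{p/(p-1)})$; the symmetric bound on $\P\{S_n < -t\}$ contributes the factor $2$. Setting $t = n\eps$ gives the first inequality of the lemma, and inverting the tail recovers the confidence-interval form. The main obstacle I anticipate is bookkeeping the constant $K_p$ so that $K_p \geq \frac{1}{2 \cdot 3^{p/(p-1)}}$ holds uniformly in $p \in (1,2)$: the naive choice $T = 4/\lambda$ is convenient but degrades as $p \to 1^+$, so I expect to need the refined choice of $T$ (or a direct convexity argument on the minimum of the two MGF bounds) to secure the stated constant $3$ in front of $\|\vec{b}\|_{\ell_p}$.
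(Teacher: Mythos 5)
Your Chernoff-type argument is a genuinely different route from the paper's, and it can be made to work. The paper instead argues combinatorially: order $b_1 \geq \dots \geq b_n$, bound the contribution of the $k$ largest-range summands deterministically by $2\|\vec{b}_{[k]}\|_{\ell_1} \leq 2k^{1-1/p}\|\vec{b}\|_{\ell_p}$, apply ordinary Hoeffding to the remaining tail, control $\|\vec{b}_{\setminus k}\|_{\ell_2}$ by the best-$k$-term approximation estimate $\|\vec{b}_{\setminus k}\|_{\ell_2} \leq (k+1)^{-(1/p-1/2)}\|\vec{b}\|_{\ell_p}$, and choose $k = \lfloor 2\log\frac{2}{\delta}\rfloor$; the two pieces contribute $2+1$, which is where the constant $3$ comes from. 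What your route buys is a single tail bound valid for every $\eps>0$ with no case distinction (the paper needs a separate trivial argument when $2\log\frac{2}{\delta}\geq n$); what it costs is the MGF machinery and, crucially, the constant bookkeeping you defer to the end.

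That deferred step is not mere bookkeeping, and your preferred refinement makes it harder, not easier. With $T = 4(p-1)/((2-p)\lambda)$, which minimizes the \emph{sum} $\frac{\lambda^2}{2}T^{2-p}+2\lambda T^{1-p}$, one gets $C_p = c^{1-p}(\frac{c}{2}+2)$ with $c=\frac{4(p-1)}{2-p}$ and $K_p = \frac{p-1}{p}(pC_p)^{-1/(p-1)}$. At $p=6/5$ this yields $c=1$, $pC_p=3$, hence $K_p = \frac16\cdot 3^{-5} = \frac{1}{1458}$, which equals the required $\frac12\,3^{-p/(p-1)} = \frac12\cdot 3^{-6}$ \emph{exactly}; proving $K_p\geq\frac12 3^{-p/(p-1)}$ on all of $(1,2)$ then means showing a function with a double zero at $p=6/5$ is nonnegative — doable, but unpleasant and with zero slack. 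Use instead your parenthetical alternative: the two MGF bounds cross at $\lambda b_i = 4$, and the pointwise estimate $\min\{\frac{x^2}{2},\,2x\}\leq 2^{3-2p}x^p$ (for $x\leq 4$ use $x^{2-p}\leq 4^{2-p}$, for $x>4$ use $x^{1-p}<4^{1-p}$) applied to $x=\lambda b_i$ for each $i$ gives $\sum_i\psi_i(\lambda)\leq 2^{3-2p}\lambda^p\|\vec{b}\|_{\ell_p}^p$, i.e.\ $C_p = 2^{3-2p}\leq 2$. The resulting $K_p$ then exceeds $\frac12 3^{-p/(p-1)}$ by a factor of at least $8$ uniformly in $p\in(1,2)$, and this can be checked by elementary means (e.g.\ from $pC_p\leq 2$ the inequality reduces to $u\ln(5u)\geq-\ln\frac32$ for $u=p-1$, which holds since $\min_u u\ln(5u) = -\frac{1}{5e}$). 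The loss in your displayed bracket comes from adding the worst cases of the two index classes instead of taking, for each $i$, the better of the two bounds.
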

\begin{proof}
  Without loss of generality, $b_1 \geq \ldots \geq b_n \geq 0$.
  Split up the vector into its head $\vec{b}_{[k]} := (b_i)_{i=1}^k$
  and tail $\vec{b}_{\setminus k} := (b_i)_{i=k+1}^n$.
  Using the triangle inequality, we have
  \begin{equation} \label{eq:head+tail}
    \left|\frac{1}{n} \sum_{i=1}^n Z_i - a\right|
      \leq \frac{1}{n} \left(\left|\sum_{i=1}^k (Z_i - a_i)\right|
            + \left|\sum_{i=k+1}^n (Z_i - a_i)\right|\right) \,.
  \end{equation}
  For the head, again via the triangle inequality, we obtain the absolute bound
  \begin{equation} \label{eq:head}
    \left|\sum_{i=1}^k (Z_i - a_i)\right|
      \leq 2\sum_{i=1}^k b_i = 2 \,\|\vec{b}_{[k]}\|_{\ell_1}
      \leq 2 \, k^{1-1/p} \cdot \|\vec{b}_{[k]}\|_{\ell_p} \,.
  \end{equation}
  From Hoeffding's inequality, for the tail and a threshold $t > 0$ we know
  \begin{equation*}
    \P\left\{\left|\sum_{i=k+1}^n (Z_i - a_i)\right| > t\right\}
      \leq 2 \exp\left(- \frac{t^2}{2 \sum_{i=k+1}^n b_i^2}\right) =: \delta \,.
  \end{equation*}
  Resolving for $t$, we find the probabilistic bound
  \begin{equation} \label{eq:tail-2norm}
    t = \sqrt{2\log\frac{2}{\delta}} \cdot \|\vec{b}_{\setminus k}\|_{\ell_2} \,.
  \end{equation}
  We use a well-known fact about best $k$-term approximation,
  \begin{equation} \label{eq:best-k-term}
    \|\vec{b}_{\setminus k}\|_{\ell_2}
      \leq (k+1)^{-\left(\frac{1}{p} - \frac{1}{2}\right)} \cdot \|\vec{b}\|_{\ell_p} \,,
  \end{equation}
  see \cite[Lem~2.1]{Tem86a} for the first occurance of this inequality,
  also check out \cite[eq~(2.4)]{CDdV09} for its application in compressed sensing,
  and \cite[Sec~7.4]{DTU18} for a survey of extensions and historical context.
  Choosing $k := \left\lfloor 2 \log \frac{2}{\delta}\right\rfloor$,
  as long as $2 \log \frac{2}{\delta} < n$, 
  from~\eqref{eq:head}, and \eqref{eq:tail-2norm} combined with~\eqref{eq:best-k-term},
  all plugged into \eqref{eq:head+tail}, we find that
  \begin{equation} \label{eq:boundin_p-norm}
    \left|\frac{1}{n} \sum_{i=1}^n Z_i - a\right|
      \leq \frac{3}{n} \cdot \left( 2 \log \frac{2}{\delta}\right)^{1-1/p}
             \cdot \|\vec{b}\|_{\ell_p} =: \eps
  \end{equation}
  holds with probability $1-\delta$, as claimed.
  If $2 \log \frac{2}{\delta} \geq n$, then we only use the head estimate \eqref{eq:head}
  with $k = n$, which is even better than \eqref{eq:boundin_p-norm}.
  Resolving \eqref{eq:boundin_p-norm} for $\delta$, we obtain
  the alternative representation of the lemma.
\end{proof}

For $L_p$-integrable random variables $Z$, with $1 \leq p < \infty$, we write
\begin{equation*}
  \|Z\|_{L_p} := \left(\expect |Z|^p\right)^{1/p} \,.
\end{equation*}
This norm interpretation helps to obtain the following inequality.

\begin{lemma} \label{lem:RV-triangle-ineq}
  Let $2 \leq q<\infty$ and let
  $Y_1,\ldots,Y_n$ be $L_q$-integrable random variables.
  Then
  \begin{equation*}
    \expect \left(\sum_{i=1}^n |Y_i|^2\right)^{q/2}
      \leq \left(\sum_{i=1}^n \left(\expect |Y_i|^q\right)^{2/q}\right)^{q/2} \,.
  \end{equation*}
\end{lemma}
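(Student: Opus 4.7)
My plan is to recognize the inequality as a straightforward application of Minkowski's inequality in the space $L_{q/2}$. The key observation is that since $q \geq 2$, we have $q/2 \geq 1$, so $L_{q/2}$ is a proper normed space where the triangle inequality holds.

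First, I would set $X_i := |Y_i|^2$, which are nonnegative $L_{q/2}$-integrable random variables. Then I would rewrite both sides of the inequality in terms of $L_{q/2}$-norms. Specifically, taking the $(2/q)$-th power of both sides of the desired inequality, it becomes equivalent to
\begin{equation*}
  \left\|\sum_{i=1}^n X_i\right\|_{L_{q/2}}
    \leq \sum_{i=1}^n \|X_i\|_{L_{q/2}} \,,
\end{equation*}
where I use that $\|X_i\|_{L_{q/2}} = (\expect |Y_i|^q)^{2/q}$ and $\|\sum_i X_i\|_{L_{q/2}} = (\expect (\sum_i |Y_i|^2)^{q/2})^{2/q}$ by positivity.

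The reformulated inequality is just the triangle inequality (Minkowski) applied in $L_{q/2}$, which is a genuine norm because $q/2 \geq 1$. Raising both sides back to the power $q/2$ — which preserves the inequality since both sides are nonnegative and $q/2 \geq 1$ — recovers the claim.

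There is essentially no obstacle here, since the assumption $q \geq 2$ is precisely what ensures $L_{q/2}$ is a Banach space and Minkowski applies; the proof reduces to bookkeeping of the exponents.
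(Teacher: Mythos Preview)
Your proof is correct and essentially identical to the paper's: both set $p := q/2 \geq 1$, recognize the left-hand side as $\bigl\|\sum_i |Y_i|^2\bigr\|_{L_p}^p$, and apply the triangle inequality (Minkowski) in $L_p$ to the nonnegative random variables $|Y_i|^2$, using $\||Y_i|^2\|_{L_p} = (\expect|Y_i|^q)^{2/q}$.
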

\begin{proof}
  Writing $p := q/2 \geq 1$, we employ the triangle inequality for the $L_p$-norm and estimate
  \begin{align*}
    \expect\left[\left(\sum_{i=1}^n |Y_i|^2\right)^{q/2}\right]
      &= \left\|\sum_{i=1}^n |Y_i|^2 \right\|_{L_p}^p
      \leq \left(\sum_{i=1}^n \|Y_i^2\|_{L_p}\right)^p
      = \left(\sum_{i=1}^n \|Y_i\|_{L_q}^2\right)^{q/2} \,,
  \end{align*}
  using $\|Y_i^2\|_{L_p} = \left(\expect |Y_i|^{2p}\right)^{1/p} = \left(\expect |Y_i|^{q}\right)^{2/q} = \|Y_i\|_{L_q}^2$. 
\end{proof}

We finally give a Marcinkiewicz-Zygmund type inequality for the $q$-th central absolute moment
of the mean of independent random variables.

\begin{lemma} \label{lem:MZ-ineq}
  For $1 \leq q < \infty$ there exists a constant $c_q > 0$
  such that for any collection $Z_1,\ldots,Z_n$ of independent, $L_q$-integrable random variables with mean
  $a_i := \expect Z_i$, and $a := \frac{1}{n} \sum_{i=1}^{n} a_i$,
  the following inequality holds:
  \begin{equation*}
    \left\|\frac{1}{n} \sum_{i=1}^n Z_i - a\right\|_{L_q}
      \leq \frac{c_q}{n} \cdot 
          \begin{cases}
                \left(\sum\limits_{i=1}^n \|Z_i\|_{L_q}^q \right)^{1/q}
                  &\text{for } 1 \leq q < 2 \\
                \sqrt{\sum\limits_{i=1}^n \|Z_i\|_{L_q}^2 }
                  &\text{for } 2 \leq q < \infty \,.
              \end{cases}
  \end{equation*}
\end{lemma}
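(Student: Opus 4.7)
The plan is to reduce to the classical Marcinkiewicz--Zygmund inequality for sums of independent centred random variables, and then to control the resulting $L_q$-norm of $\bigl(\sum X_i^2\bigr)^{1/2}$ by either Lemma~\ref{lem:RV-triangle-ineq} (for $q \geq 2$) or a pointwise inclusion of $\ell_2$ into $\ell_q$ (for $1 \leq q < 2$).

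First I would centre the variables by setting $X_i := Z_i - a_i$. These are independent with $\expect X_i = 0$, and by the triangle inequality satisfy $\|X_i\|_{L_q} \leq \|Z_i\|_{L_q} + |a_i| \leq 2\|Z_i\|_{L_q}$, since $|a_i| = |\expect Z_i| \leq \|Z_i\|_{L_1} \leq \|Z_i\|_{L_q}$. Because $\frac{1}{n}\sum_{i=1}^n Z_i - a = \frac{1}{n}\sum_{i=1}^n X_i$, it suffices to bound $\bigl\|\sum_{i=1}^n X_i\bigr\|_{L_q}$ in terms of the individual norms $\|X_i\|_{L_q}$ and divide by $n$ at the end.

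Second, I would invoke the classical Marcinkiewicz--Zygmund inequality: there is a constant $B_q > 0$ such that for any finite family $X_1,\dots,X_n$ of independent centred $L_q$-integrable random variables,
\begin{equation*}
  \left\|\sum_{i=1}^n X_i\right\|_{L_q}
    \leq B_q \, \left\|\Bigl(\sum_{i=1}^n X_i^2\Bigr)^{1/2}\right\|_{L_q}.
\end{equation*}
A brief derivation: let $X_i'$ be an independent copy of $X_i$, so $X_i - X_i'$ is symmetric. Jensen's inequality gives $\expect\bigl|\sum X_i\bigr|^q \leq \expect\bigl|\sum (X_i - X_i')\bigr|^q$, and by symmetry this equals $\expect\bigl|\sum \epsilon_i(X_i - X_i')\bigr|^q$ for independent Rademacher signs $\epsilon_i$. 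Conditioning on $(X_i,X_i')$ and applying Khintchine's inequality to the Rademacher sum, then taking expectation, yields the bound (the triangle inequality $\|X_i - X_i'\|_{L_q} \leq 2\|X_i\|_{L_q}$ handles the transition back to the original variables, absorbed into $B_q$).

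Third, I would estimate $\bigl\|\bigl(\sum X_i^2\bigr)^{1/2}\bigr\|_{L_q}$ in the two regimes. For $q \geq 2$, Lemma~\ref{lem:RV-triangle-ineq} applied with $Y_i = X_i$ gives directly
\begin{equation*}
  \left\|\Bigl(\sum X_i^2\Bigr)^{1/2}\right\|_{L_q}
    \leq \Bigl(\sum_{i=1}^n \|X_i\|_{L_q}^2\Bigr)^{1/2}.
\end{equation*}
For $1 \leq q < 2$, the monotonicity of $\ell_p$-norms in $p$ yields the pointwise bound $(\sum X_i^2)^{1/2} \leq (\sum |X_i|^q)^{1/q}$, and raising to the $q$-th power and taking expectation gives $\expect(\sum X_i^2)^{q/2} \leq \sum \|X_i\|_{L_q}^q$. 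Dividing through by $n$ and absorbing the factor $2$ from $\|X_i\|_{L_q} \leq 2\|Z_i\|_{L_q}$ into the constant $c_q$ finishes the proof. The main obstacle is the classical MZ step itself, as the symmetrization plus Khintchine argument is standard but non-trivial to write out in detail; one could alternatively cite a standard reference such as Chow--Teicher or Ledoux--Talagrand.
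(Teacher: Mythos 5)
Your proof is correct, and for $2 \leq q < \infty$ it coincides with the paper's argument: Marcinkiewicz--Zygmund followed by Lemma~\ref{lem:RV-triangle-ineq} applied to the centred variables, with the factor $2$ from $\|Z_i - a_i\|_{L_q} \leq 2\|Z_i\|_{L_q}$ absorbed into $c_q$. For $1 \leq q < 2$ you take a genuinely different route. The paper invokes the von Bahr--Esseen inequality, which directly gives $\bigl\|\sum_i (Z_i - a_i)\bigr\|_{L_q} \leq 2^{1/q}\bigl(\sum_i \|Z_i - a_i\|_{L_q}^q\bigr)^{1/q}$ with an explicit and small constant, and whose proof for $1 \leq q \leq 2$ is elementary. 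You instead use the Marcinkiewicz--Zygmund inequality in the full range $q \geq 1$ and then pass from the square function to the $\ell_q$-sum via the pointwise monotonicity $\bigl(\sum_i X_i^2\bigr)^{1/2} \leq \bigl(\sum_i |X_i|^q\bigr)^{1/q}$, which is valid since $q \leq 2$; this is a correct and more unified treatment (one master inequality for all $q$), at the price of needing the symmetrization-plus-Khintchine machinery for $q < 2$ and of losing the explicit constants the paper records ($c_q \leq 2^{1+1/q}$ for $q \leq 2$). One small imprecision in your sketch of the classical MZ step: the transition back from the symmetrized variables $X_i - X_i'$ to the $X_i$ is not handled by the scalar bound $\|X_i - X_i'\|_{L_q} \leq 2\|X_i\|_{L_q}$ but by the pointwise Minkowski inequality $\bigl(\sum_i (X_i - X_i')^2\bigr)^{1/2} \leq \bigl(\sum_i X_i^2\bigr)^{1/2} + \bigl(\sum_i (X_i')^2\bigr)^{1/2}$ followed by the $L_q$ triangle inequality and equidistribution of $X'$ and $X$; this is standard and does not affect the validity of your argument.
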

\begin{proof}
  For $1 \leq q \leq 2$ we use \cite[Thm~3]{vBEs65}, rescaled by $\frac{1}{n}$, and obtain
  \begin{equation*}
    \left\|\frac{1}{n} \sum_{i=1}^n Z_i - a\right\|_{L_q}
      \leq \frac{2^{1/q}}{n} \left(\sum_{i=1}^n \|Z_i - a_i\|_{L_q}^q\right)^{1/q} \,.
  \end{equation*}

  For $q \geq 2$
  we employ the Marcinkiewicz-Zygmund inequality
  which states that
  \begin{equation} \label{eq:MZ}
    \expect\left(\left|\sum_{i=1}^n (Z_i - a_i)\right|^q \right)
      \leq B_q \expect\left[\left(\sum_{i=1}^n (Z_i - a_i)^2\right)^{q/2}\right] \,,
  \end{equation}
  with $B_q = (q-1)^q$, 
  see~\cite{Burkh88}.
  Lemma~\ref{lem:RV-triangle-ineq} applied to $Y_i := Z_i - a_i$ gives
  \begin{equation*}
    \expect\left[\left(\sum_{i=1}^n (Z_i - a_i)^2\right)^{q/2}\right]
      \leq \left(\sum_{i=1}^n \|Z_i - a_i\|_{L_q}^2\right)^{q/2}.
  \end{equation*}
  Combined with \eqref{eq:MZ}, taking the $q$-th root, and rescaling with $\frac{1}{n}$,
  we get
  \begin{equation*}
    \left\|\frac{1}{n} \sum_{i=1}^n Z_i - a\right\|_{L_q}
      \leq \frac{B_q^{1/q}}{n} \left(\sum_{i=1}^n \|Z_i - a_i\|_{L_q}^2 \right)^{1/2} \,.
  \end{equation*}
  
  Finally, with $\|Z_i - a_i\|_{L_q} \leq \|Z_i\|_{L_q} + |a_i| \leq 2\|Z_i\|_{L_q}$,
  we arrive at the assertion for both cases with the following upper bounds on the constant:
  \begin{equation*}
    c_q \leq \begin{cases}
            2^{1+1/q}
              &\text{for } 1 \leq q \leq 2 \,, \\
            2(q-1)
              &\text{for } 2 \leq q \leq \infty \,.
          \end{cases}
  \end{equation*}
  These estimates on $c_q$ are not optimal. 
  In fact, for $q=1$ one can directly show that even $c_1 \leq 2$ is valid.
  For $q=2$, Bienaym\'e's identity and $\|Z_k - a_i\|_{L_2} \leq \|Z_k\|_{L_2}$
  imply the optimal constant $c_2 = 1$.
\end{proof}

\section*{Acknowledgements}

Several results in this work have been found during the Dagstuhl Seminar 23351
on \emph{Algorithms and Complexity for Continuous Problems},
encouraged by several conversations with colleagues.
I wish to thank the organizers and the Leibniz Center for Informatics
for hosting this event at the inspiring location of Schloss Dagstuhl.

\end{document}